\title[Effective basepoint-free theorem]
{Effective basepoint-free theorem for semi-log canonical surfaces}
\author{Osamu Fujino}
\date{2017/1/18, version 0.10}
\subjclass[2010]{Primary 14C20; Secondary 14E30.}
\keywords{Fujita's freeness conjecture, log canonical pairs, 
semi-log canonical pairs, quasi-log structures, log surfaces, 
stable surfaces, semi-log canonical Fano surfaces, effective 
very ampleness}
\address{Department of Mathematics, School of Science, Osaka University, 
Toyonaka, Osaka 560--0043, Japan}
\email{fujino@math.sci.osaka-u.ac.jp}
\newcommand{\Supp}[0]{\operatorname{Supp}}
\newcommand{\Exc}[0]{\operatorname{Exc}}
\newcommand{\mult}[0]{\operatorname{mult}}
\newcommand{\Sing}[0]{\operatorname{Sing}}
\newcommand{\Nlc}[0]{\operatorname{Nlc}}
\newcommand{\Nqlc}[0]{\operatorname{Nqlc}}
\newcommand{\Bs}[0]{\operatorname{Bs}}
\newtheorem{thm}{Theorem}[section]
\newtheorem{lem}[thm]{Lemma}
\newtheorem{cor}[thm]{Corollary}
\newtheorem{conj}[thm]{Conjecture}
\theoremstyle{definition}
\newtheorem{defn}[thm]{Definition}
\newtheorem{rem}[thm]{Remark}
\newtheorem*{ack}{Acknowledgments}
\newtheorem{say}[thm]{}
\newtheorem{step}{Step}
\newtheorem{case}{Case} 
\begin{document}
\bibliographystyle{amsalpha+}

\maketitle

\begin{abstract}
This paper proposes 
a Fujita-type freeness conjecture for semi-log canonical pairs. 
We prove it for curves and surfaces by using the theory of 
quasi-log schemes and 
give some effective very ampleness results for 
stable surfaces and semi-log canonical Fano surfaces. 
We also prove an effective freeness for log surfaces.  
\end{abstract}

\section{Introduction}\label{f-sec1}

We will work over $\mathbb C$, the complex number field, throughout this paper.
Note that, by the Lefschetz principle,
all the results in 
this paper hold over any algebraically closed field $k$ of characteristic zero.

This paper proposes the following 
Fujita-type freeness conjecture for projective 
semi-log canonical pairs. 

\begin{conj}[Fujita-type freeness conjecture for 
semi-log canonical pairs]\label{f-conj1.1} 
Let $(X, \Delta)$ be an $n$-dimensional 
projective semi-log canonical pair and let $D$ be a Cartier divisor on $X$. 
We put $A=D-(K_X+\Delta)$. 
Assume that 
\begin{itemize}
\item[(1)] $(A^n\cdot {X_i})>n^n$ for 
every irreducible component $X_i$ of $X$, and 
\item[(2)] $(A^d\cdot W)\geq n^d$ for every $d$-dimensional 
irreducible subvariety $W$ of $X$ for $1\leq d\leq n-1$. 
\end{itemize} 
Then the complete linear system $|D|$ is basepoint-free. 
\end{conj}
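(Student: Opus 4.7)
The plan is to attack this by induction on $n$ inside the framework of quasi-log schemes, which is the natural setting for vanishing and adjunction on semi-log canonical pairs; since the abstract only claims the statement for $n=1,2$, I organize the sketch around those cases while making clear how the induction would continue. The base case $n=1$ should reduce, via quasi-log Kodaira vanishing applied to the qlc pair $(X,\Delta)$, to the assertion that $\deg(A|_{X_i}) > 1$ suffices to kill $H^1(X,\mathcal{I}_x\otimes\mathcal{O}_X(D))$ for each closed point $x$; this is a qlc analogue of the classical argument, possibly after adjoining a small boundary supported at $x$.

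The interesting case is $n=2$. Fix a closed point $x\in X$ and let $X_i$ be the irreducible component through $x$. Step one: use $(A^2\cdot X_i)>4$ together with an asymptotic Riemann--Roch estimate on the normalization of $X_i$ to produce an effective $\mathbb{Q}$-divisor $B\sim_{\mathbb{Q}}(1-\varepsilon)A$ (at least on $X_i$) with $\mult_x B>2$, and transport it to a boundary on $(X,\Delta)$. Step two: by tie-breaking, slide the coefficient of $B$ down until $(X,\Delta+B)$ acquires a minimal qlc center $W$ through $x$ with $0\leq \dim W\leq 1$. Step three: invoke quasi-log vanishing for the embedded ideal $\mathcal{I}_W\subset\mathcal{O}_X$ to obtain $H^1(X,\mathcal{I}_W\otimes\mathcal{O}_X(D))=0$, reducing basepoint-freeness of $|D|$ at $x$ to the same question for $D|_W$ on the induced quasi-log scheme structure on $W$. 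Step four: if $\dim W=1$, condition~(2) gives $\deg(A|_W)\geq 2$, which is exactly the hypothesis of the $n=1$ case, while if $\dim W=0$ the lift is immediate.

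The main obstacle will be the non-normal locus. When $x$ lies on the conductor of $(X,\Delta)$ -- equivalently on a double curve -- the component $X_i$ is not normal, the conductor already appears in $\Delta$, and the qlc centers passing through $x$ include the double curves themselves, so the usual multiplier-ideal construction of $B$ has to be replaced by its quasi-log analogue and the choice of minimal center in step two has to be made carefully to avoid being absorbed into $\Nqlc$. Moreover, if the minimal qlc center $W$ is forced by the geometry to be a double curve, one must verify that the quasi-log structure that $W$ inherits from $X$ really does turn condition~(2) into a genuine freeness hypothesis in dimension one, so that the base case applies. Dealing cleanly with these slc-specific pathologies, rather than the volume estimate itself, is where the genuine difficulty lies.
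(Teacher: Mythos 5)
Your overall architecture --- produce $B$ with $\mult_x B>2$ from $(A^2\cdot X_i)>4$, cut down to a minimal qlc center $W$ through $x$ via a log canonical threshold $c$, apply quasi-log vanishing and adjunction, and treat the double curves of the conductor as slc centers --- is the same as the paper's. But your Step four has a genuine gap at exactly the point where the real work happens. After you replace $\Delta$ by $\Delta+cB$, the quasi-log canonical class of the induced structure on the curve $W=L$ is $(K_X+\Delta+cB)|_L$, so the divisor you must control is $D|_L-(K_X+\Delta+cB)|_L\sim_{\mathbb R}(1-c)A|_L$, of degree $(1-c)(A\cdot L)$, which is bounded below only by $2(1-c)$ --- not $A|_L$ of degree $\geq 2$. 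Since $c$ can be arbitrarily close to $1$, this degree can fall far below the threshold needed to run the one-dimensional argument, so the claim that condition (2) hands you ``exactly the hypothesis of the $n=1$ case'' is false as stated; this failure of the naive induction is the classical difficulty in all Fujita-type arguments. The paper closes the gap with the correlated estimate \eqref{eq517}: writing $\mult_PB=2+a$, $\alpha=\mult_P\Delta$, $\delta=\mult_P\Delta'$, one gets $c=(1+\delta-\alpha)/(2+a)$, while the coefficient $c'=1-\mult_P(\Delta'|_L)\leq 1-\delta$ needed to make $x$ a qlc center of the curve shrinks in tandem with the available degree, and the two effects cancel to leave the positive margin $(a+2\alpha+a\delta)/(2+a)\geq a/(2+a)>0$. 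Without some such inversion-of-adjunction computation your induction does not close.

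A second omission: your construction of $B$ and the inference ``$\mult_xB>2$ implies $c<1$'' require $x$ to be a smooth point of $X$. At a singular klt point the paper must pass to the minimal resolution $\pi:Y\to X$ (so that $K_Y+\Delta_Y=\pi^*(K_X+\Delta)$ with $\Delta_Y\geq 0$), produce $B'$ of multiplicity $>2$ at some $y\in\pi^{-1}(x)$ pulled back from $X$, and separately dispose of the possibility that the resulting one-dimensional qlc center is $\pi$-exceptional. You locate the main difficulty in the non-normal locus, but that part is handled cleanly by quasi-log adjunction (your Steps two and three, which do match the paper); the genuinely delicate points are the degree estimate above and this singular-point reduction.
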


By \cite[Corollary 3.5]{liu-san}, the complete linear system $|D|$ is basepoint-free 
if $A^n>\left(\frac{1}{2} n(n+1)\right)^n$ and 
$(A^d\cdot W)> 
\left(\frac{1}{2}n(n+1)\right)^d$ hold true in Conjecture 
\ref{f-conj1.1}, 
which is 
obviously a generalization of Anghern--Siu's effective 
freeness (see \cite{anghern-siu} and \cite{fujino-effective}). 

Of course, the above conjecture is a naive generalization of 
Fujita's celebrated conjecture: 

\begin{conj}[Fujita's freeness conjecture]\label{f-conj1.2}
Let $X$ be a smooth projective variety 
with $\dim X=n$ and let $H$ be an ample Cartier divisor on $X$. 
Then the complete linear system $|K_X+(n+1)H|$ is basepoint-free. 
\end{conj}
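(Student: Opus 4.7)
The plan is to follow the multiplier-ideal / non-klt-locus approach of Kawamata, Angehrn--Siu, and Ein--Lazarsfeld. Fix a point $x \in X$ at which we must exhibit a global section of $\mathcal{O}_X(K_X+(n+1)H)$ not vanishing at $x$. It suffices to construct an effective $\mathbb{Q}$-divisor $D \sim_{\mathbb{Q}} \lambda H$ with $0 < \lambda < n+1$ such that $\{x\}$ is an isolated non-klt center of $(X,D)$: then $K_X+(n+1)H-(K_X+D) \sim_{\mathbb{Q}} (n+1-\lambda)H$ is ample, Nadel vanishing kills $H^1(X, \mathcal{J}(X,D)\otimes \mathcal{O}_X(K_X+(n+1)H))$, and the skyscraper quotient at $x$ contributes a nonzero local section that lifts to a global section nonvanishing at $x$.

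First I would produce an initial non-klt divisor at $x$. Since $h^0(X, \mathcal{O}_X(kH))$ grows like $(H^n/n!)\,k^n$ while vanishing to order $\ell$ at a smooth point imposes roughly $\ell^n/n!$ conditions, an elementary dimension count yields an effective $D_0 \sim_{\mathbb{Q}} \lambda_0 H$ with $\mult_x D_0 > n$ and $\lambda_0$ barely larger than $n$, so that $(X, D_0)$ is already non-klt at $x$ by the standard multiplicity bound.

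Next I would execute Kawamata's tie-breaking trick: after adding a small general perturbation and decreasing the coefficient, force the minimal non-klt center $V$ of the resulting $D$ through $x$ to be unique and irreducible. If $\dim V = 0$ we are done; otherwise Kawamata subadjunction writes $(K_X+D)|_V \sim_{\mathbb{Q}} K_V + D_V$ with $D_V$ effective and $H|_V$ still ample, and I would recurse on the lower-dimensional pair, arranging inductively that the accumulated coefficient stays strictly below $n+1$.

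The main obstacle is that this inductive bookkeeping is too wasteful to reach the optimal bound $n+1$: the Angehrn--Siu execution yields only $\lambda$ of order $n(n+1)/2$, which is essentially the weaker statement quoted just before the conjecture in the excerpt. Reider's Bogomolov-instability argument closes the gap for $n=2$, Ein--Lazarsfeld for $n=3$, Kawamata for $n=4$, and Ye--Zhu for $n=5$, each by a far subtler analysis of how the minimal centers can sit inside $X$. I do not expect the sketch above alone to push beyond the known range, and for $n \geq 6$ the conjecture should, honestly, remain open.
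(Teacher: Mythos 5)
There is nothing in the paper to check your argument against: the statement you were assigned is Fujita's freeness conjecture itself, which the paper presents as a celebrated \emph{open} conjecture and never proves (the paper only establishes the semi-log canonical analogue, Conjecture \ref{f-conj1.1}, in dimensions one and two). Your writeup correctly recognizes this. The sketch you give --- producing a divisor $D_0\sim_{\mathbb Q}\lambda_0 H$ with multiplicity greater than $n$ at $x$ by a dimension count, tie-breaking to isolate a minimal non-klt center, applying Nadel vanishing, and recursing via Kawamata subadjunction --- is the standard Angehrn--Siu strategy, and your diagnosis of where it stalls is accurate: the inductive cutting-down loses a summand of roughly $d$ at each dimension $d$, which is exactly why the method delivers only the bound $\tfrac{1}{2}n(n+1)$ quoted in the introduction (via \cite{liu-san}) rather than $n+1$. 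The low-dimensional cases you cite (Reider for $n=2$, Ein--Lazarsfeld for $n=3$, Kawamata for $n=4$, Ye--Zhu for $n=5$) are indeed the current state of the art. So the only genuine ``gap'' here is the one you name yourself: no proof is offered, none exists in the paper, and none is known in general. Your honesty is the correct mathematical conclusion; just be aware that a proof proposal for a conjecture should be flagged as a non-proof from the first line rather than the last.
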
 

The main theorem of this paper is: 

\begin{thm}[Main theorem, see Theorem \ref{f-thm2.1} and 
Theorem \ref{f-thm5.1}]\label{f-thm1.3}
Conjecture \ref{f-conj1.1} holds true in dimension one and two. 
\end{thm}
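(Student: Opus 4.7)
The plan is to handle dimensions one and two separately, in both cases exploiting the natural quasi-log structure on any semi-log canonical pair together with the Kodaira-type vanishing and basepoint-free theorems available for quasi-log schemes.

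In dimension one, $X$ is a nodal curve, possibly reducible and disconnected, and the support of $\Delta$ contains the conductor at each node. Fix $x\in X$ and aim to prove $H^1(X,\mathcal{O}_X(D)\otimes \mathcal{I}_x)=0$, which gives $x\notin \Bs|D|$. Writing $D-x\sim K_X+\Delta+A'$ where $A'$ restricts to $A-[x]$ on the components through $x$ and to $A$ elsewhere, the hypothesis $(A\cdot X_i)>1$ on every component, combined with integrality of intersection numbers on components through $x$, yields strict nef positivity of $A'$ component by component. The quasi-log Kodaira vanishing for slc pairs then delivers the required $H^1$ vanishing; at a node one restricts to the two branches and uses slc adjunction to reduce to the smooth-point case on each component of the normalization.

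In dimension two, the plan is a tie-breaking argument in its qlc formulation. Fix $x\in X$. First, using the numerical hypothesis $(A^2\cdot X_i)>4$, produce for each component $X_i$ through $x$ an effective $\mathbb{Q}$-divisor $B_i\sim_{\mathbb{Q}}\lambda_i A|_{X_i}$ with $\lambda_i<1$ and $\mult_x B_i>2$, and match them across the non-normal locus to obtain a global effective $B\sim_{\mathbb{Q}}\lambda A$ with $\lambda<1$ such that $(X,\Delta+B)$ fails to be slc at $x$. Next, scale by the log canonical threshold $c\leq 1$ and tie-break by adding a small multiple of a general member of $|mA|$ to arrange that the minimal non-qlc center $W$ of $(X,\Delta+cB)$ through $x$ is either the point $x$ or an irreducible curve. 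If $W=\{x\}$, conclude via the basepoint-free theorem for quasi-log schemes applied to the pair $(X,\Delta+cB)$, noting that $A-cB\sim_{\mathbb{Q}}(1-c)A$ remains nef and big with positive self-intersection. If $W$ is a curve, restrict via quasi-log adjunction to endow $W$ with a qlc structure compatible with $D|_W$, and apply the one-dimensional case using $(A\cdot W)\geq 2$.

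The main obstacle lies in the first step of the surface case: on a reducible, non-normal slc surface one must produce the cutting divisors $B_i$ on different components in a coherent manner, matching their restrictions across the conductor double curve while controlling both the multiplicity at $x$ and the coefficients. A secondary subtlety is verifying that the qlc structure induced on the curve $W$ by adjunction satisfies the positivity hypothesis required to invoke the one-dimensional case, namely that $(A\cdot W')>1$ on each irreducible component $W'$ of $W$; this must be extracted from the hypothesis $(A\cdot W)\geq 2$ together with the minimality of $W$ as a non-qlc center and the connectedness provided by the qlc adjunction machinery.
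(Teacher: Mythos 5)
Your dimension-one argument is essentially the paper's (Theorem \ref{f-thm2.1}): one makes $P$ a zero-dimensional slc center by adding $cP$ with $c=1-\mult_P\Delta\leq 1$ and applies the vanishing theorem of \cite{fujino-slc}, the hypothesis $(A\cdot X_i)>1$ absorbing the loss of at most $1$ (integrality of $A\cdot X_i$ is neither available, since $\Delta$ is an $\mathbb R$-divisor, nor needed). The surface case, however, has a genuine gap at the step you pass over most quickly. When the minimal qlc center of $[X_0,(K_X+\Delta)|_{X_0}+cB]$ through $P$ is a curve $L$, the positivity that adjunction leaves on $L$ is not $A|_L$ but $(D-(K_X+\Delta+cB))|_L\sim_{\mathbb R}(1-c)A|_L$, of degree at least $2(1-c)$; this exceeds the amount $c'\leq 1$ that must still be subtracted at $P$ on $L$ only when $c$ is small, and nothing forces $c<1/2$. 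So ``apply the one-dimensional case using $(A\cdot W)\geq 2$'' does not close the argument. The paper's proof turns on the estimate \eqref{eq517}: writing $\mult_PB=2+a$ with $a>0$, $\Delta+cB=L+\Delta'$, $\alpha=\mult_P\Delta$, $\delta=\mult_P\Delta'$, and $c'=1-\mult_P(\Delta'|_L)$, one gets
\[
\deg\bigl(D|_L-(K_X+\Delta+cB)|_L-c'P\bigr)\geq (1-c)\cdot 2-(1-\delta)=\frac{a+2\alpha+a\delta}{2+a}>0,
\]
which exploits the inverse relation $c=(1+\delta-\alpha)/(2+a)$ between the log canonical threshold and the multiplicity of $B$, together with $\mult_P(\Delta'|_L)\geq\delta$. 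This joint estimate is exactly what makes the Fujita-type bounds $A^2>4$, $A\cdot C\geq 2$ (rather than Angehrn--Siu-type bounds) suffice, and it is absent from your plan.

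Two further points. The ``main obstacle'' you identify --- gluing the cutting divisors $B_i$ across the conductor --- is self-inflicted: after disposing of the cases where $P$ lies on an slc center (by adjunction and vanishing for the induced quasi-log structures, as in Steps 2 and 3 of the paper's proof), $P$ lies on a unique irreducible component $X_0$, and the vanishing theorem makes $H^0(X,\mathcal O_X(D))\to H^0(X_0,\mathcal O_{X_0}(D))$ surjective, so $B$ need only be constructed on $X_0$. Finally, you never treat the case where $P$ is a singular (klt) point of $X$, where the multiplicity--threshold bookkeeping above breaks down; the paper passes to the minimal resolution $\pi:Y\to X$, uses $\Bs|\pi^*D|=\pi^{-1}\Bs|D|$, and reruns the smooth-point argument for $\pi^*D$ at a suitable $Q\in\pi^{-1}(P)$.
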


As a corollary of Theorem \ref{f-thm1.3}, 
we have: 

\begin{cor}[{cf.~\cite[Theorem 24]{lr}}]\label{f-cor1.4}
Let $(X, \Delta)$ be a stable surface such that $K_X+\Delta$ is 
$\mathbb Q$-Cartier. 
Let $I$ be the smallest positive integer such that $I(K_X+\Delta)$ 
is Cartier. Then $|mI(K_X+\Delta)|$ is basepoint-free 
and $3mI(K_X+\Delta)$ is very ample for every 
$m\geq 4$. 
If $I\geq 2$, then $|mI(K_X+\Delta)|$ is basepoint-free 
and $3mI(K_X+\Delta)$ is very ample 
for every $m\geq 3$. In particular, $12I(K_X+\Delta)$ is always 
very ample and $9I(K_X+\Delta)$ is very 
ample if $I\geq 2$. 
\end{cor}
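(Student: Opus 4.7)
The plan is to apply Theorem~\ref{f-thm1.3} directly to $D=mI(K_X+\Delta)$ to obtain the basepoint-freeness assertion, and then to upgrade basepoint-freeness of $L:=mI(K_X+\Delta)$ to very ampleness of $3L$ by a separation argument that feeds off the same numerical input.

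For basepoint-freeness, set $A:=D-(K_X+\Delta)=(mI-1)(K_X+\Delta)$. Since $(X,\Delta)$ is stable, $K_X+\Delta$ is ample and $\mathbb{Q}$-Cartier, and by minimality of $I$ the divisor $I(K_X+\Delta)$ is an ample Cartier divisor. On each irreducible component $X_i$ the restriction $I(K_X+\Delta)|_{X_i}$ is an ample Cartier divisor on a projective surface, so $(I(K_X+\Delta))^2\cdot X_i$ is a positive integer; likewise $(I(K_X+\Delta)\cdot W)$ is a positive integer for every curve $W\subset X$. Hence
\[
(A^2\cdot X_i)\ \geq\ \frac{(mI-1)^2}{I^2},\qquad (A\cdot W)\ \geq\ \frac{mI-1}{I}.
\]
The hypotheses $(A^2\cdot X_i)>4$ and $(A\cdot W)\geq 2$ of Conjecture~\ref{f-conj1.1} then translate into $m>2+\tfrac{1}{I}$ and $m\geq 2+\tfrac{1}{I}$, which first hold at $m\geq 4$ when $I=1$ and at $m\geq 3$ when $I\geq 2$. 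Theorem~\ref{f-thm1.3} now produces the basepoint-freeness of $|mI(K_X+\Delta)|$ in the stated ranges.

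To deduce very ampleness of $3L$ from basepoint-freeness of $|L|$ on an slc surface, I would separate an arbitrary length-two zero-dimensional subscheme $Z\subset X$ by feeding it into the quasi-log-scheme machinery underlying Theorem~\ref{f-thm1.3}: endow $(X,\Delta)$ with the quasi-log structure coming from $\Delta$ together with a small contribution built from $Z$, and verify that the extra factor of $3$ supplies enough slack in $(A^2\cdot X_i)$ and $(A\cdot W)$ to satisfy Conjecture~\ref{f-conj1.1} relative to this enriched data. The main obstacle is exactly this step: the basepoint-freeness half reduces to a clean numerical check, but separating length-two subschemes on a non-normal slc surface forces one to control the quasi-log structure at nodes and pinch points, where ordinary Reider-type arguments do not apply. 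Once the separation statement is established, specialising $m=4$ (and $m=3$ when $I\geq 2$) yields the final ``in particular'' assertions about $12I(K_X+\Delta)$ and $9I(K_X+\Delta)$.
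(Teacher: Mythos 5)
Your first half is exactly the paper's argument: with $D=mI(K_X+\Delta)$ and $A=(m-1/I)\,I(K_X+\Delta)$, the fact that $I(K_X+\Delta)$ is ample and Cartier gives $(A\cdot C)\geq m-1/I$ and $(A^2\cdot X_i)\geq (m-1/I)^2$, and the thresholds $m\geq 4$ (resp.\ $m\geq 3$ when $I\geq 2$) make Theorem \ref{f-thm5.1} applicable. That part is correct and complete.

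The very ampleness half, however, is a genuine gap. You only sketch a plan --- separating arbitrary length-two subschemes by enriching the quasi-log structure --- and you yourself flag it as the main unresolved obstacle; nothing is actually proved. Moreover, the plan is not the right mechanism: the numerical hypotheses of Conjecture \ref{f-conj1.1} and the whole quasi-log cutting procedure in the proof of Theorem \ref{f-thm5.1} are calibrated to produce a single zero-dimensional qlc center through one prescribed point, not to isolate a length-two scheme (which, Reider-style, would demand strictly stronger intersection bounds and a separate treatment of tangent vectors at nodes and pinch points). The paper avoids all of this: it deduces very ampleness of $3D$ from basepoint-freeness of $|D|$ via Lemma \ref{f-lem7.1}, i.e.\ the vanishing $H^i(X,\mathcal O_X((n+1-i)D))=0$ from \cite[Theorem 1.10]{fujino-slc} plus Castelnuovo--Mumford regularity, which makes $\mathrm{Sym}^kH^0(X,\mathcal O_X((n+1)D))\to H^0(X,\mathcal O_X(k(n+1)D))$ surjective and forces $\Phi_{|(n+1)D|}$ to be an embedding. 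You should replace your separation plan by an appeal to that regularity argument (with $n=2$, $L=D-(K_X+\Delta)$ ample, hence nef and log big), which closes the proof without ever separating points by hand.
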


Note that a {\em{stable pair}} $(X, \Delta)$ is a projective 
semi-log canonical pair $(X, \Delta)$ such that $K_X+\Delta$ is ample. 
A {\em{stable surface}} is a $2$-dimensional stable pair.  
We also have: 

\begin{cor}[Semi-log canonical Fano surfaces]\label{f-cor1.5} 
Let $(X, \Delta)$ be a projective semi-log canonical surface such that 
$-(K_X+\Delta)$ is an ample $\mathbb Q$-divisor. 
Let $I$ be the smallest positive integer such that 
$I(K_X+\Delta)$ is Cartier. 
Then $|-mI(K_X+\Delta)|$ is basepoint-free 
and $-3mI(K_X+\Delta)$ is very ample for every $m\geq 2$. 
In particular, $-6I(K_X+\Delta)$ is very ample.  
\end{cor}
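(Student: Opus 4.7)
The plan is to deduce both claims from the two-dimensional case of Theorem \ref{f-thm1.3} applied to the Cartier divisor $D := -mI(K_X+\Delta)$, combined with the very-ampleness mechanism presumably developed in the proof of Corollary \ref{f-cor1.4}.

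First I would verify the numerical hypotheses of Conjecture \ref{f-conj1.1} for this $D$. Setting
\[
A := D - (K_X+\Delta) = -(mI+1)(K_X+\Delta) = \frac{mI+1}{I}\bigl(-I(K_X+\Delta)\bigr),
\]
I observe that $A$ is a positive rational multiple of the ample Cartier divisor $-I(K_X+\Delta)$. Because $-I(K_X+\Delta)$ is Cartier and ample, its intersection with any irreducible curve $W\subset X$ and its self-intersection on any irreducible component $X_i$ are positive integers, hence at least $1$. Consequently, for $m\geq 2$,
\[
(A\cdot W)\geq \frac{mI+1}{I}\;\geq\; 2+\frac{1}{I}\;>\;2,\qquad (A^2\cdot X_i)\geq \left(\frac{mI+1}{I}\right)^{\!2}\!>4,
\]
so conditions (1) and (2) of Conjecture \ref{f-conj1.1} are satisfied and Theorem \ref{f-thm1.3} yields basepoint-freeness of $|-mI(K_X+\Delta)|$ for every $m\geq 2$.

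For the very-ampleness claim, I would write $-3mI(K_X+\Delta)$ as the sum of three copies of the basepoint-free Cartier divisor $-mI(K_X+\Delta)$ and run the same point- and tangent-separation argument that underlies the very-ampleness part of Corollary \ref{f-cor1.4}. That argument is based on a second application of Theorem \ref{f-thm1.3} to auxiliary Cartier divisors of the form $-3mI(K_X+\Delta) - (\text{length-one or length-two scheme})$, together with cohomological vanishing for quasi-log schemes and adjunction to the components of $X$ and to the conductor. In the Fano setting the numerical inputs needed at this second step are again automatic, because $-I(K_X+\Delta)$ is ample and Cartier. The ``in particular'' assertion follows by specializing to $m=2$.

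The main obstacle is the very-ampleness half. Basepoint-freeness drops out of Theorem \ref{f-thm1.3} essentially for free, but very ampleness on a possibly non-normal semi-log canonical surface genuinely requires the separation machinery invoked in Corollary \ref{f-cor1.4}; one cannot simply formally multiply a basepoint-free divisor by $3$. I expect the detailed argument to parallel the proof of Corollary \ref{f-cor1.4} almost verbatim, with the ample Cartier divisor $I(K_X+\Delta)$ of the stable case replaced throughout by $-I(K_X+\Delta)$.
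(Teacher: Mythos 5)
Your verification of the basepoint-freeness half is correct and is exactly the paper's argument: with $D=-mI(K_X+\Delta)$ one gets $A=D-(K_X+\Delta)=(m+1/I)\bigl(-I(K_X+\Delta)\bigr)$, and since $-I(K_X+\Delta)$ is ample and Cartier the intersection numbers satisfy $A\cdot C\geq m+1/I>2$ and $(A^2\cdot X_i)\geq (m+1/I)^2>4$ for $m\geq 2$, so Theorem \ref{f-thm5.1} applies.

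The very-ampleness half is where your proposal has a genuine gap. You correctly sense that one cannot just multiply a basepoint-free divisor by $3$, but the mechanism you propose --- a ``point- and tangent-separation argument'' applying the freeness machinery to auxiliary divisors of the form $-3mI(K_X+\Delta)$ minus length-one or length-two subschemes --- does not exist in this paper and is not supplied by you. Theorem \ref{f-thm5.1} only produces sections not vanishing at a single prescribed point; nowhere does the paper prove a separation statement for pairs of points or for tangent vectors on a (possibly non-normal) semi-log canonical surface, and establishing one would require rerunning the entire quasi-log analysis of Section \ref{f-sec5} with the ideal sheaf of a length-two scheme, which is substantially more delicate. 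What the paper actually does is entirely different and much softer: Lemma \ref{f-lem7.1} shows that if $D$ is ample Cartier with $|D|$ basepoint-free and $D-(K_X+\Delta)$ nef and log big, then $(n+1)D$ is very ample. Its proof uses the vanishing theorem for semi-log canonical pairs to get $H^i(X,\mathcal O_X((n+1-i)D))=0$, then Castelnuovo--Mumford regularity to make $\mathrm{Sym}^kH^0(X,\mathcal O_X((n+1)D))\to H^0(X,\mathcal O_X(k(n+1)D))$ surjective for all $k\geq 1$, and finally compares the morphism $\Phi_{|(n+1)D|}$ with the embedding $\Phi_{|k(n+1)D|}$ for $k\gg 0$ to conclude it is an isomorphism onto its image. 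In the present situation $D=-mI(K_X+\Delta)$ is ample Cartier, $|D|$ is basepoint-free by the first half, and $D-(K_X+\Delta)=A$ is ample, hence nef and log big, so Lemma \ref{f-lem7.1} with $n=2$ gives very ampleness of $3D=-3mI(K_X+\Delta)$ directly. You should replace your separation sketch with an appeal to this regularity argument; as written, your proof of the very-ampleness claim rests on machinery that has not been established.
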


For log surfaces (see \cite{fujino-surface}), 
the following theorem is a reasonable formulation of 
the Reider-type freeness theorem. 
For a related topic, see \cite{kawachi}.  

\begin{thm}[Effective freeness for log surfaces]\label{f-thm1.6}
Let $(X, \Delta)$ be a complete irreducible log surface and let $D$ be 
a Cartier divisor on $X$. 
We put $A=D-(K_X+\Delta)$. Assume that 
$A$ is nef, $A^2>4$ and $A\cdot C\geq 2$ for every curve $C$ on $X$ such that 
$x\in C$. 
Then $\mathcal O_X(D)$ has a global section not vanishing at $x$. 
\end{thm}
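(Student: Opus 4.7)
The plan is to adapt the classical Reider/Angehrn--Siu argument to log surfaces, using Fujino's vanishing theorems from \cite{fujino-surface} together with the one-dimensional case of Conjecture \ref{f-conj1.1} supplied by Theorem \ref{f-thm1.3}. I argue by contradiction: assuming $x\in\Bs|D|$, the goal is to construct a global section of $\mathcal{O}_X(D)$ that does not vanish at $x$.

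Since $A$ is nef with $A^2>4$, asymptotic Riemann--Roch on $X$ produces, for $k\gg 0$ and divisible, a divisor in $|kA|$ of multiplicity greater than $2k$ at $x$ (with a correction by the local multiplicity of $X$ at $x$ if $x$ is singular on $X$). After scaling and a standard tie-breaking step (adding a small multiple of a general section of $|A|$ and readjusting the coefficient) one obtains an effective $\mathbb{Q}$-divisor $M\sim_{\mathbb{Q}} cA$ with $c<1$ such that the pair $(X,\Delta+M)$ has a \emph{unique} minimal non-klt center $W\ni x$. In dimension two, $W$ is either the reduced point $\{x\}$ or an irreducible curve through $x$.

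If $W=\{x\}$, apply the Nadel-type vanishing for log surfaces of \cite{fujino-surface} to the multiplier/adjoint ideal $\mathcal{J}$ of $(X,\Delta+M)$: since the twist $D-(K_X+\Delta+M)=(1-c)A$ is nef with positive self-intersection and strictly positive intersection with every curve through $x$, one obtains $H^1(X,\mathcal{J}\otimes\mathcal{O}_X(D))=0$, and the associated evaluation sequence yields the desired non-vanishing section. If instead $W$ is a curve through $x$, quasi-log adjunction combined with a further tie-breaking step on $W$, using the hypothesis $A\cdot W\geq 2$, cuts out $x$ as an isolated non-klt center inside $W$. The one-dimensional part of Theorem \ref{f-thm1.3} then produces a section of $\mathcal{O}_W(D|_W)$ not vanishing at $x$, which lifts to $X$ by the same vanishing applied to the kernel of the restriction $\mathcal{O}_X(D)\to\mathcal{O}_W(D|_W)$.

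The main obstacle will be the curve case. One must verify that quasi-log adjunction, as developed in \cite{fujino-surface}, equips $W$ with a structure in which the numerical hypothesis $A\cdot W\geq 2$ translates cleanly into the hypothesis of the one-dimensional freeness statement; that the secondary tie-breaking on $W$ can be carried out despite $(X,\Delta)$ not being globally log canonical; and that sections genuinely lift through $W\hookrightarrow X$ via vanishing on the adjoint ideal. A secondary subtlety is the Riemann--Roch estimate at a singular point $x\in X$, where the bound $A^2>4$ must absorb the local multiplicity of $X$ at $x$ for the initial construction of $M$ to go through.
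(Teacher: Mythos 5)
Your outline is the classical multiplier-ideal (Angehrn--Siu/Kawachi) route, whereas the paper runs the entire argument inside the quasi-log formalism; unfortunately the points you defer as ``subtleties'' are exactly where the classical route breaks down, so there are genuine gaps. First, a log surface $(X,\Delta)$ is \emph{not} assumed to be log canonical, and you never treat the case where $(X,\Delta)$ is already non-klt (or non-lc) at $x$. There the log canonical threshold of $B$ at $x$ is $0$, tie-breaking does not produce ``the unique minimal non-klt center through $x$'', and---decisively---there is no adjunction to a non-klt center of a non-lc pair in the multiplier-ideal framework, so your curve case cannot even be set up. The paper spends Steps 2--4 of its proof on precisely these cases, using the non-qlc locus and quasi-log adjunction. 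Second, your lifting step is unjustified: Nadel vanishing does give $H^1(X,\mathcal J\otimes\mathcal O_X(D))=0$ for the multiplier ideal $\mathcal J$ of $(X,\Delta+M)$ (since $(1-c)A$ is nef and big), but the kernel of $\mathcal O_X(D)\to\mathcal O_W(D|_W)$ is $\mathcal I_W\otimes\mathcal O_X(D)$, and $\mathcal I_W\ne\mathcal J$ away from $x$: the cosupport of $\mathcal J$ typically contains further curves and embedded points. Worse, because $A$ is only nef and $A\cdot C\ge 2$ is assumed only for curves through $x$, the subscheme you must restrict to can contain curves $W'$ with $A\cdot W'=0$, on which $(1-c)A$ is not big, so the ``nef and log big'' hypothesis of the adjunction-plus-vanishing theorem fails. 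This is exactly why the paper proves and invokes Theorem \ref{f-thm4.5} (bigness required only on qlc strata not contained in the subscheme one restricts to) in place of Theorem \ref{f-thm4.4}~(ii); your proposal has no substitute for it.

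Two further points. At a singular point $x\in X$ the correct device is not a ``correction by the local multiplicity'': the implication from $\mult_xB>2$ to the log canonical threshold being less than $1$ is a smooth-point statement. The paper instead passes to the minimal resolution $\pi\colon Y\to X$, takes $B'=\pi^*B$ with $\mult_yB'>2$ at some $y\in\pi^{-1}(x)$ using $(\pi^*A)^2=A^2>4$, and reruns the whole argument upstairs, where $K_Y+\Delta_Y=\pi^*(K_X+\Delta)$ still has effective boundary by minimality. Finally, the assertion that $A\cdot W\ge 2$ ``cuts out $x$'' after a second tie-break on the curve $W$ is the quantitative heart of the proof and requires the explicit estimate \eqref{eq517}: writing $\mult_PB=2+a$, $c=(1+\delta-\alpha)/(2+a)$ and $c'=1-\mult_P(\Delta'|_L)$, the twist on $L$ has degree at least $a/(2+a)>0$. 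Without this computation the positivity needed to apply the one-dimensional statement on $W$ is not established.
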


We know that the theory of log surfaces initiated in \cite{fujino-surface} 
now holds in characteristic $p>0$ 
(see \cite{fujino-tanaka}, \cite{tanaka-nagoya}, and \cite{tanaka-vanishing}). 
Therefore, it is natural to propose: 

\begin{conj}\label{f-conj1.7}
Theorem \ref{f-thm1.6} holds in characteristic $p>0$. 
\end{conj}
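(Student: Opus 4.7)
The plan is to transport the characteristic-zero proof of Theorem \ref{f-thm1.6} into positive characteristic by systematically replacing each analytic/Hodge-theoretic input with its known or conjectural $p>0$ counterpart for log surfaces. The characteristic-zero argument has two essential pillars: (i) a vanishing theorem on a suitable resolution producing a lifting-type exact sequence that reduces basepoint-freeness at $x$ to the non-vanishing of an extension class, and (ii) a Reider/Bogomolov instability analysis on a rank-two reflexive sheaf that, when $A^2>4$ and $A\cdot C\geq 2$ for every curve through $x$, forces the existence of a section not vanishing at $x$. The reason the conjecture is plausible is precisely that the foundational framework of log surfaces in characteristic $p$ has been built in \cite{fujino-tanaka}, \cite{tanaka-nagoya}, and \cite{tanaka-vanishing}, so there is now a genuine target category in which to run the argument.

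For step (i), I would pass to a log resolution $\pi\colon \widetilde{X}\to X$ compatible with Tanaka's vanishing setup, and try to prove $H^1(\widetilde{X}, \pi^*D - K_{\widetilde X} - \lfloor \widetilde\Delta\rfloor - \text{(ideal at $x$)})=0$ using Tanaka's Kawamata--Viehweg-type theorem for klt surfaces in characteristic $p$, in place of Fujino's vanishing. The delicate point is that full Kawamata--Viehweg vanishing genuinely fails for surfaces in positive characteristic (e.g., counterexamples of Raynaud and of Cascini--Tanaka), so one must verify that the precise line bundle appearing in the Reider construction falls within the hypotheses of Tanaka's theorem; this may require imposing that $p$ is sufficiently large, or exploiting that $A$ is nef together with the strict bound $A^2>4$ to apply a semiampleness-type reduction before invoking vanishing.

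For step (ii), one constructs from an Ext-class a rank-two sheaf $\mathcal E$ on $\widetilde X$ with $c_1(\mathcal E)$ and $c_2(\mathcal E)$ controlled by $A$, and then deduces Bogomolov-instability from $c_1(\mathcal E)^2>4c_2(\mathcal E)$. Here lies the main obstacle: in characteristic $p$, Bogomolov's inequality can fail after Frobenius pullback, and a destabilizing subsheaf need not be strongly destabilizing. I would invoke Langer's corrected Bogomolov inequality, which introduces a discriminant correction bounded in terms of the Frobenius-destabilization of $\mathcal E$; the task is to show that the strict inequality $A^2>4$ leaves enough slack to absorb this correction, perhaps by passing to a sufficiently large Frobenius pullback and using Shepherd-Barron's analysis of destabilizing subsheaves on surfaces. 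Once a destabilizing sub-line bundle $L\subset \mathcal E$ is produced, the hypothesis $A\cdot C\geq 2$ for curves through $x$ is used to derive a contradiction exactly as in Reider's original argument.

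The principal difficulty I anticipate is coupling the singular/log structure of $X$ with the Frobenius correction: Frobenius interacts badly with boundaries, and ensuring that the descent from $\widetilde X$ back to $X$ preserves both the numerical conditions on $A$ and the Langer-type bounds on the discriminant correction is subtle. A realistic path to the conjecture may therefore require either restricting to $p\gg 0$, or replacing Bogomolov-instability entirely by a Mukai-type vanishing argument using the $F$-singularity theory developed by Schwede--Tucker and Patakfalvi for log surfaces in characteristic $p$.
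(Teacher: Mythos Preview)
The statement you are attempting to prove is Conjecture \ref{f-conj1.7}, and the paper does not prove it. Immediately after stating it, the author remarks that even the original Fujita freeness conjecture for smooth surfaces is open in characteristic $p>0$; the conjecture is recorded precisely because the characteristic-zero machinery (quasi-log schemes, vanishing theorems derived from mixed Hodge theory) has no known positive-characteristic analogue adequate to carry the argument of Section \ref{f-sec6}. So there is no ``paper's own proof'' to compare your proposal against.

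As a research outline your proposal is reasonable in spirit but contains a structural mismatch with the paper. The proof of Theorem \ref{f-thm1.6} in Section \ref{f-sec6} is \emph{not} a Reider/Bogomolov argument via rank-two sheaves and instability; it is a quasi-log-scheme argument that proceeds by case analysis on the singularity type of $(X,\Delta)$ at $x$ and repeatedly invokes the adjunction and vanishing package of Theorems \ref{f-thm4.4} and \ref{f-thm4.5}. Your plan to transport ``the characteristic-zero proof'' therefore starts from a description of that proof which does not match what the paper actually does. If you want to attack the conjecture by mimicking the paper, the correct pillar to replace is the Hodge-theoretic vanishing underlying Theorem \ref{f-thm4.5}, not Bogomolov instability; and as you yourself note, full Kawamata--Viehweg vanishing fails in characteristic $p$, which is exactly why this remains a conjecture. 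Your alternative Reider-type route is a legitimate independent strategy, but it is a different attack altogether and, as you acknowledge, the Langer correction term and the interaction of Frobenius with the boundary are genuine obstructions that your sketch does not resolve.
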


Note that the original form of Fujita's freeness conjecture (see Conjecture \ref{f-conj1.2}) 
is still open for surfaces in characteristic $p>0$. 

The standard approach to 
the Fujita-type freeness conjectures is based on 
the Kawamata--Viehweg vanishing theorem 
(see \cite{ein-lazarsfeld}). 
However, we can not directly apply the Kawamata--Viehweg 
vanishing theorem to log canonical pairs and semi-log canonical pairs. 
Therefore,, we will use the theory of quasi-log schemes (see \cite{fujino-slc}, 
\cite{fujino-reid-fukuda}, \cite{fujino-foundation}, and so on). 

We summarize the contents of this paper. 
In Section \ref{f-sec2}, 
we prove Conjecture \ref{f-conj1.1} for semi-log canonical 
curves using the 
vanishing theorem obtained in \cite{fujino-slc}. 
This section may help the reader to understand more complicated 
arguments in the subsequent sections. 
In Section \ref{f-sec3}, we collect some basic definitions. 
In Section \ref{f-sec4}, 
we quickly recall the theory of quasi-log schemes. 
Section \ref{f-sec5} is the main part of this paper. 
In this section, we prove Conjecture \ref{f-conj1.1} for semi-log canonical 
surfaces. Section \ref{f-sec6} is devoted to the 
proof of  Theorem \ref{f-thm1.6}, which is an effective freeness for 
log surfaces. 
In Section \ref{f-sec7}, 
which is independent of the other sections, 
we prove 
an effective very ampleness lemma. 

\begin{ack}
The author was partially supported 
by JSPS KAKENHI Grant Numbers JP2468002, JP16H03925, JP16H06337. 
He would like to thank Professor J\'anos Koll\'ar for answering his 
question. Finally, he thanks the referee very much for many useful comments and 
suggestions. 
\end{ack}

For the standard 
notations and conventions of the 
minimal model program, see \cite{fujino-fundamental} and \cite{fujino-foundation}. 
For the details of semi-log canonical pairs, see \cite{fujino-slc}. 
In this paper, a {\em{scheme}} means a separated scheme of finite type over 
$\mathbb C$ and a {\em{variety}} means a reduced scheme. 

\section{Semi-log canonical curves}\label{f-sec2}
In this section, we prove Conjecture \ref{f-conj1.1} in dimension one 
based on \cite{fujino-slc}. 
This section will help the reader to understand the subsequent sections. 

\begin{thm}\label{f-thm2.1}
Let $(X, \Delta)$ be a projective semi-log canonical 
curve and let $D$ be a Cartier divisor on $X$. 
We put $A=D-(K_X+\Delta)$. 
Assume that $(A\cdot X_i)>1$ for every irreducible component $X_i$ of $X$. 
Then the complete linear system $|D|$ is basepoint-free. 
\end{thm}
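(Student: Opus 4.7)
The plan is to fix an arbitrary closed point $x\in X$ and show that the evaluation map $H^0(X,\mathcal O_X(D))\to \mathcal O_X(D)\otimes\mathbb C(x)$ is surjective. To do this, I would construct an effective $\mathbb Q$-divisor $\Delta_x$ on $X$ such that $(X,\Delta+\Delta_x)$ is semi-log canonical, the set $\{x\}$ is a $0$-dimensional qlc stratum of the associated quasi-log structure on $[X,K_X+\Delta+\Delta_x]$, and the residual divisor $A-\Delta_x$ is still ample. The quasi-log vanishing theorem from \cite{fujino-slc} will then produce the desired surjection.

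The construction of $\Delta_x$ splits into cases according to the type of $x$. If $x$ is a node of $X$, or if $\mult_x\Delta=1$, then $\{x\}$ is already a $0$-dimensional slc stratum of $(X,\Delta)$, and I take $\Delta_x=0$. Otherwise $x$ lies in the smooth locus of $X$ and $a:=\mult_x\Delta\in[0,1)$; here I set $\Delta_x=(1-a)\cdot x$, so that the total coefficient at $x$ becomes $1$ and $(X,\Delta+\Delta_x)$ is slc with $\{x\}$ an slc stratum. In every case $\Delta_x$ has degree at most $1$ on the unique component $X_i$ containing $x$ and degree $0$ on every other component, so the hypothesis $(A\cdot X_i)>1$ gives $(A-\Delta_x)\cdot X_j>0$ for every irreducible component $X_j$ of $X$. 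Since ampleness of a $\mathbb Q$-divisor on a proper curve is equivalent to positivity of degree on each component, $A-\Delta_x$ is ample.

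The adjunction for quasi-log schemes applied to $[X,K_X+\Delta+\Delta_x]$ then endows $\{x\}$ with an induced qlc structure and gives a short exact sequence
$$0\longrightarrow \mathcal I_{\{x\}}\otimes \mathcal O_X(D)\longrightarrow \mathcal O_X(D)\longrightarrow \mathcal O_{\{x\}}\longrightarrow 0.$$
Because $D-(K_X+\Delta+\Delta_x)=A-\Delta_x$ is ample, the quasi-log vanishing theorem of \cite{fujino-slc} yields $H^1(X,\mathcal I_{\{x\}}\otimes \mathcal O_X(D))=0$. Passing to the long exact sequence in cohomology gives the surjection $H^0(X,\mathcal O_X(D))\twoheadrightarrow H^0(\{x\},\mathcal O_{\{x\}})\cong\mathbb C$, i.e.\ a section of $\mathcal O_X(D)$ not vanishing at $x$, as desired.

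The main technical point I expect to need care is purely a bookkeeping one: verifying uniformly across the three sub-cases above (node, smooth point in $\Supp\Delta$ with coefficient strictly less than $1$, and smooth point of coefficient $1$) that $\{x\}$ is a bona fide qlc stratum of the right quasi-log scheme, so that the adjunction sequence and the attendant $H^1$-vanishing are legitimately applicable in each case. Once this is in place the argument is immediate, and the same template — modifying $\Delta$ to force $x$ into an slc stratum while preserving the positivity of the residual divisor, then invoking quasi-log vanishing — is what the paper presumably scales up, with considerably more combinatorial work, to handle the surface case in Section~\ref{f-sec5}.
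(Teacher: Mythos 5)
Your proposal is correct and follows essentially the same route as the paper: handle the case where $x$ is already a zero-dimensional slc center directly, and otherwise (where $X$ is necessarily smooth at $x$) add $(1-\mult_x\Delta)\cdot x$ to make $x$ an slc center while the degree hypothesis $(A\cdot X_i)>1$ keeps the residual divisor positive on every component, then conclude by the vanishing theorem of \cite{fujino-slc}. The only cosmetic difference is that the paper folds your ``node'' and ``coefficient one'' sub-cases into the single case of a pre-existing zero-dimensional slc center before perturbing.
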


If $(X, \Delta)$ is log canonical, that is, $X$ is normal, in Theorem \ref{f-thm2.1}, 
then the statement is obvious. 
However, Theorem \ref{f-thm2.1} seems to be nontrivial when $X$ is not normal. 

\begin{proof}[Proof of Theorem \ref{f-thm2.1}]
We will see that 
the restriction map 
\begin{equation}\label{f-eq2.1}
H^0(X, \mathcal O_X(D))\to \mathcal O_X(D)\otimes \mathbb C(P)
\end{equation} 
is surjective for every $P\in X$. 
Of course, it is sufficient to prove that $H^1(X, \mathcal I_P\otimes \mathcal O_X(D))=0$, 
where $\mathcal I_P$ is the defining ideal sheaf of $P$ on $X$. 
If $P$ is a zero-dimensional 
semi-log canonical center of $(X, \Delta)$, then 
we know that 
$H^1(X, \mathcal I_P\otimes \mathcal O_X(D))=0$ by \cite[Theorem 1.11]{fujino-slc}. 
Therefore, we may assume that $P$ is not a 
zero-dimensional semi-log canonical center of $(X, \Delta)$. 
Thus, we see that $X$ is normal, that is, smooth, at $P$ 
(see, for example, \cite[Corollary 3.5]{fujino-slc}). 
We put 
\begin{equation}
c=1-\mult_P \Delta. 
\end{equation} 
Then we have $0<c\leq 1$. 
We consider $(X, \Delta+cP)$. 
Then $(X, \Delta+cP)$ is semi-log canonical and $P$ is a zero-dimensional 
semi-log canonical center of $(X, \Delta+cP)$. 
Since 
\begin{equation}
\left( (D-(K_X+\Delta+cP))\cdot X_i\right)>0
\end{equation} 
for every irreducible component $X_i$ of $X$ by the assumption that 
$(A\cdot X_i)>1$ and the fact that $c\leq 1$, 
we obtain that 
$H^1(X, \mathcal I_P\otimes \mathcal O_X(D))=0$ 
(see \cite[Theorem 1.11]{fujino-slc}). 
Therefore, we see that $H^1(X, \mathcal I_P\otimes \mathcal O_X(D))=0$ for 
every $P\in X$. 
Thus, we have the desired surjection \eqref{f-eq2.1}. 
\end{proof}

The above proof of Theorem \ref{f-thm2.1} heavily depends on the vanishing theorem 
for semi-log canonical pairs (see \cite[Theorem 1.11]{fujino-slc}), 
which follows from 
the theory of quasi-log schemes based on 
the theory of mixed Hodge structures on cohomology with compact support. 
For the details, see \cite{fujino-slc} and \cite{fujino-foundation}. 
In dimension two, we will directly use the framework of quasi-log 
schemes. 
Therefore, it is much more difficult than the proof of Theorem \ref{f-thm2.1}. 

\section{Preliminaries}\label{f-sec3}

In this section, we collect some basic definitions. 

\begin{say}[Operations for $\mathbb R$-divisors]\label{f-say3.1}
Let $D$ be an $\mathbb R$-divisor 
on an equidimensional variety $X$, that is, 
$D$ is a finite formal $\mathbb R$-linear combination 
\begin{equation}
D=\sum _i d_i D_i
\end{equation} of irreducible 
reduced subschemes $D_i$ of codimension one, where $D_i\ne D_j$ for $i\ne j$. 
We define the {\em{round-up}} 
$\lceil D\rceil =\sum _i \lceil d_i \rceil D_i$ (resp.~{\em{round-down}} 
$\lfloor D\rfloor =\sum _i \lfloor d_i \rfloor D_i$), where for 
every real number $x$, $\lceil x\rceil$ (resp.~$\lfloor x\rfloor$) is the integer 
defined by $x\leq \lceil x\rceil <x+1$ 
(resp.~$x-1<\lfloor x\rfloor \leq x$). 
We put 
\begin{equation}
D^{<1}=\sum _{d_i<1}d_i D_i \quad {\text{and}}\quad 
D^{>1}=\sum _{d_i>1}d_i D_i. 
\end{equation}
We call $D$ a {\em{boundary}} (resp.~{\em{subboundary}}) $\mathbb R$-divisor if 
$0\leq d_i\leq 1$ (resp.~$d_i\leq 1$) for every $i$. 
\end{say}

\begin{say}[Singularities of pairs]\label{f-say3.2}
Let $X$ be a normal variety and let $\Delta$ be an 
$\mathbb R$-divisor on $X$ 
such that $K_X+\Delta$ is $\mathbb R$-Cartier. 
Let $f:Y\to X$ be 
a resolution such that $\Exc(f)\cup f^{-1}_*\Delta$, 
where $\Exc (f)$ is the exceptional locus of $f$ 
and $f^{-1}_*\Delta$ is 
the strict transform of $\Delta$ on $Y$,  
has a simple normal crossing support. We can 
write 
\begin{equation}\label{f-eq3.1}
K_Y=f^*(K_X+\Delta)+\sum _i a_i E_i. 
\end{equation}
We say that $(X, \Delta)$ 
is {\em{sub log canonical}} ({\em{sub lc}}, for short) if $a_i\geq -1$ for every $i$. 
We usually write $a_i= a(E_i, X, \Delta)$
and call it the {\em{discrepancy coefficient}} of 
$E_i$ with respect to $(X, \Delta)$. 
Note that we can define $a(E, X, \Delta)$ for every prime divisor 
$E$ {\em{over}} $X$. 
If $(X, \Delta)$ is sub log canonical and $\Delta$ is effective, then 
$(X, \Delta)$ is called {\em{log canonical}} ({\em{lc}}, for short). 

It is well-known that there is the largest Zariski open subset $U$ 
of $X$ such that  
$(U, \Delta|_U)$ is sub log canonical (see, for example, 
\cite[Lemma 2.3.10]{fujino-foundation}). 
If there exist a resolution $f:Y\to X$ and a divisor $E$ on $Y$ such 
that $a(E, X, \Delta)=-1$ and $f(E)\cap U\ne \emptyset$, then $f(E)$ is called a 
{\em{log canonical center}} (an {\em{lc center}}, for short) with respect to $(X, \Delta)$. 
A closed subset $C$ of $X$ is called a {\em{log canonical stratum}} 
(an {\em{lc stratum}}, for short) of $(X, \Delta)$ if and only if 
$C$ is a log canonical center of $(X, \Delta)$ or $C$ is 
an irreducible component of $X$. 
We note that the {\em{non-lc locus}} of $(X, \Delta)$, which is 
denoted by $\Nlc(X, \Delta)$, is $X\setminus U$. 

Let $X$ be a normal variety and let $\Delta$ be an effective $\mathbb R$-divisor on 
$X$ such that $K_X+\Delta$ is $\mathbb R$-Cartier. 
If $a(E, X, \Delta)>-1$ for every divisor $E$ over $X$, 
then $(X, \Delta)$ is called {\em{klt}}. If 
$a(E, X, \Delta)>-1$ for every exceptional divisor $E$ over $X$, 
then $(X, \Delta)$ is called {\em{plt}}. 
\end{say}

Let us recall the definitions around {\em{semi-log canonical pairs}}. 

\begin{say}[Semi-log canonical pairs]\label{f-say3.3}
Let $X$ be an 
equidimensional variety that 
satisfies Serre's $S_2$ condition and 
is normal crossing in codimension one. 
Let $\Delta$ be an effective $\mathbb R$-divisor 
whose support does not contain any irreducible components 
of the conductor of $X$. 
The pair $(X, \Delta)$ is called a {\em{semi-log canonical pair}} (an {\em{slc pair}}, 
for short) 
if 
\begin{itemize}
\item[(1)] $K_X+\Delta$ is $\mathbb R$-Cartier, and 
\item[(2)] $(X^\nu, \Theta)$ is log canonical, 
where $\nu:X^\nu\to X$ is the normalization and $K_{X^\nu}+\Theta=
\nu^*(K_X+\Delta)$, that is, $\Theta$ is the sum of the 
inverse images of $\Delta$ and the conductor of $X$. 
\end{itemize}

Let $(X, \Delta)$ be a semi-log canonical pair and let $\nu:X^\nu\to X$ be 
the normalization. 
We set  
\begin{equation}
K_{X^\nu}+\Theta=\nu^*(K_X+\Delta)
\end{equation} 
as above. 
A closed subvariety $W$ of $X$ is called a {\em{semi-log canonical center}} 
(an {\em{slc center}}, for short) {\em{with 
respect to $(X, \Delta)$}} if there exist a resolution of singularities $f: Y\to X^\nu$ and 
a prime divisor $E$ on $Y$ such that 
the discrepancy coefficient $a(E, X^\nu, \Theta)=-1$ and $\nu\circ f(E)=W$. 
A closed subvariety $W$ of $X$ is called a {\em{semi-log canonical stratum}} 
({\em{slc stratum}}, for short) of 
the pair $(X, \Delta)$ if 
$W$ is a semi-log canonical center with respect to $(X, \Delta)$ or $W$ is an 
irreducible component of $X$. 
\end{say}

We close this section with the notion of {\em{log surfaces}} (see \cite{fujino-surface}). 

\begin{say}[Log surfaces]\label{f-say3.4}
Let $X$ be a normal surface and let $\Delta$ be a boundary $\mathbb R$-divisor on $X$. 
Assume that $K_X+\Delta$ is $\mathbb R$-Cartier. 
Then the pair $(X, \Delta)$ is called a {\em{log surface}}. 
A log surface $(X, \Delta)$ is not always assumed to be log canonical. 

In \cite{fujino-surface}, 
we establish the minimal model program for log surfaces in full generality 
under the assumption that $X$ is $\mathbb Q$-factorial or $(X, \Delta)$ has only 
log canonical singularities. 
For the theory of log surfaces in characteristic $p>0$, 
see \cite{fujino-tanaka}, \cite{tanaka-nagoya}, and \cite{tanaka-vanishing}.  
\end{say}

\section{On quasi-log structures}\label{f-sec4}
Let us quickly recall the definitions of {\em{globally embedded simple 
normal crossing pairs}} and {\em{quasi-log schemes}} for 
the reader's convenience. 
For the details, see, for example, \cite{fujino-pull} 
and \cite[Chapter 5 and Chapter 6]{fujino-foundation}. 

\begin{defn}[Globally embedded simple normal crossing 
pairs]\label{f-def4.1} 
Let $Y$ be a simple normal crossing divisor 
on a smooth 
variety $M$ and let $D$ be an $\mathbb R$-divisor 
on $M$ such that 
$\Supp (D+Y)$ is a simple normal crossing divisor on $M$ and that 
$D$ and $Y$ have no common irreducible components. 
We put $B_Y=D|_Y$ and consider the pair $(Y, B_Y)$. 
We call $(Y, B_Y)$ a {\em{globally embedded simple normal 
crossing pair}} and $M$ the {\em{ambient space}} of $(Y, B_Y)$. 
A {\em{stratum}} of $(Y, B_Y)$ is 
the $\nu$-image of a log canonical stratum of $(Y^\nu, \Theta)$ 
where $\nu:Y^\nu\to Y$ is the normalization and $K_{Y^\nu}+\Theta
=\nu^*(K_Y+B_Y)$, that is, $\Theta$ is the sum of 
the inverse images of $B_Y$ and the singular locus of $Y$. 
\end{defn}

In this paper, we adopt the following definition of 
quasi-log schemes. 

\begin{defn}[Quasi-log schemes]\label{f-def4.2}
A {\em{quasi-log scheme}} is a scheme $X$ endowed with an 
$\mathbb R$-Cartier divisor 
(or $\mathbb R$-line bundle) 
$\omega$ on $X$, a proper closed subscheme 
$X_{-\infty}\subset X$, and a finite collection $\{C\}$ of reduced 
and irreducible subschemes of $X$ such that there is a 
proper morphism $f:(Y, B_Y)\to X$ from a globally 
embedded simple 
normal crossing pair satisfying the following properties: 
\begin{itemize}
\item[(1)] $f^*\omega\sim_{\mathbb R}K_Y+B_Y$. 
\item[(2)] The natural map 
$\mathcal O_X
\to f_*\mathcal O_Y(\lceil -(B_Y^{<1})\rceil)$ 
induces an isomorphism 
$$
\mathcal I_{X_{-\infty}}\overset{\simeq}{\longrightarrow} f_*\mathcal O_Y(\lceil 
-(B_Y^{<1})\rceil-\lfloor B_Y^{>1}\rfloor),  
$$ 
where $\mathcal I_{X_{-\infty}}$ is the defining ideal sheaf of 
$X_{-\infty}$. 
\item[(3)] The collection of subvarieties $\{C\}$ coincides with the image 
of $(Y, B_Y)$-strata that are not included in $X_{-\infty}$. 
\end{itemize}
We simply write $[X, \omega]$ to denote 
the above data 
$$
\bigl(X, \omega, f:(Y, B_Y)\to X\bigr)
$$ 
if there is no risk of confusion. 
Note that a quasi-log scheme $X$ is the union of $\{C\}$ and $X_{-\infty}$. 
We also note that $\omega$ is called the {\em{quasi-log canonical class}} 
of $[X, \omega]$, which is defined up to $\mathbb R$-linear equivalence.  
We sometimes simply say that 
$[X, \omega]$ is a {\em{quasi-log pair}}. 
The subvarieties $C$ 
are called the {\em{qlc strata}} of $[X, \omega]$, 
$X_{-\infty}$ is called the {\em{non-qlc locus}} 
of $[X, \omega]$, and $f:(Y, B_Y)\to X$ is 
called a {\em{quasi-log resolution}} 
of $[X, \omega]$. 
We sometimes use $\Nqlc(X, \omega)$ to denote 
$X_{-\infty}$. A closed subvariety $C$ of $X$ is called a {\em{qlc center}} 
of $[X, \omega]$ if $C$ is a qlc stratum of $[X, \omega]$ which is not 
an irreducible component of $X$. 

Let $[X, \omega]$ be a quasi-log scheme. 
Assume that $X_{-\infty}=\emptyset$. 
Then we sometimes simply say that $[X, \omega]$ is 
a {\em{qlc pair}} or 
$[X, \omega]$ is a quasi-log scheme with only {\em{quasi-log canonical 
singularities}}. 
\end{defn}

\begin{defn}[Nef and log big divisors 
for quasi-log schemes]\label{f-def4.3} 
Let $L$ be an $\mathbb R$-Cartier divisor (or 
$\mathbb R$-line bundle) on a quasi-log pair $[X, \omega]$ and 
let $\pi:X\to S$ be a proper morphism between schemes. 
Then $L$ is {\em{nef and log big over $S$ with 
respect to $[X, \omega]$}} if $L$ is 
$\pi$-nef and $L|_C$ is $\pi$-big for every 
qlc stratum $C$ of $[X, \omega]$. 
\end{defn}

The following theorem is a key result for the theory of quasi-log schemes. 

\begin{thm}[Adjunction and vanishing theorem]\label{f-thm4.4}
Let $[X, \omega]$ be a quasi-log scheme and let $X'$ be the union of 
$X_{-\infty}$ with a {\em{(}}possibly empty{\em{)}} union of some 
qlc strata of $[X, \omega]$. Then 
we have the following properties. 
\begin{itemize}
\item[(i)] Assume that $X'\ne X_{-\infty}$. Then 
$X'$ is a quasi-log scheme with $\omega'=\omega|_{X'}$ and 
$X'_{-\infty}=X_{-\infty}$. Moreover, the qlc strata of $[X', \omega']$ are 
exactly the qlc strata of $[X, \omega]$ that are included in $X'$. 
\item[(ii)] Assume that $\pi:X\to S$ is a proper morphism between schemes. 
Let $L$ be a Cartier divisor on $X$ such that 
$L-\omega$ is nef and log big over $S$ with respect to $[X, \omega]$. 
Then $R^i\pi_*(\mathcal I_{X'}\otimes \mathcal O_X(L))=0$ for every $i>0$, 
where $\mathcal I_{X'}$ is the defining ideal sheaf of $X'$ on $X$. 
\end{itemize}
\end{thm}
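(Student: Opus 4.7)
The plan is to reduce both assertions to the fundamental Kodaira-type vanishing theorem for globally embedded simple normal crossing pairs (the vanishing via mixed Hodge structures on cohomology with compact support that underlies the entire quasi-log formalism, cf.\ \cite{fujino-foundation}). Fix a quasi-log resolution $f:(Y,B_Y)\to X$ with ambient smooth variety $M$, and set
\[
A\;:=\;\lceil -(B_Y^{<1})\rceil-\lfloor B_Y^{>1}\rfloor,
\]
so that Definition \ref{f-def4.2}(2) reads $f_*\mathcal{O}_Y(A)\cong \mathcal{I}_{X_{-\infty}}$.

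For part (i), let $Y'\subset Y$ be the union of those strata of $(Y,B_Y)$ whose $f$-image is contained in $X'$. After a preparatory sequence of blow-ups of $M$ along intersections of components of $Y$ and of $\lfloor B_Y^{=1}\rfloor$---which produce an equivalent quasi-log resolution of $[X,\omega]$---I may assume that $Y'$ is itself a simple normal crossing divisor on $M$ sharing no component with $B_Y-B_Y|_{Y'}$. Then define $B_{Y'}$ by adjunction, $K_{Y'}+B_{Y'}=(K_Y+B_Y)|_{Y'}$; the pair $(Y',B_{Y'})$ is again a globally embedded SNC pair, the relation $(f|_{Y'})^*(\omega|_{X'})\sim_{\mathbb{R}} K_{Y'}+B_{Y'}$ is automatic, and the rounding identity $A|_{Y'}=\lceil -(B_{Y'}^{<1})\rceil-\lfloor B_{Y'}^{>1}\rfloor$ follows from SNC transversality. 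The ideal-sheaf condition in Definition \ref{f-def4.2}(2) is then extracted by pushing forward
\[
0\to \mathcal{O}_Y(A-Y')\to \mathcal{O}_Y(A)\to \mathcal{O}_{Y'}(A|_{Y'})\to 0,
\]
and invoking $R^1f_*\mathcal{O}_Y(A-Y')=0$, which is an instance of SNC vanishing applied to $A-Y'\sim_{\mathbb{R}} K_Y+\{B_Y\}+(B_Y^{=1}-Y')-f^*\omega$ with $f^*\omega$ numerically $f$-trivial. The strata description is then immediate, since the strata of $(Y',B_{Y'})$ are exactly the strata of $(Y,B_Y)$ contained in $Y'$.

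For part (ii), with the adjunction from (i) in hand I would use the short exact sequence
\[
0\to \mathcal{I}_{X'}\otimes \mathcal{O}_X(L)\to \mathcal{O}_X(L)\to \mathcal{O}_{X'}(L|_{X'})\to 0
\]
and reduce to showing $R^i\pi_*\mathcal{O}_X(L)=0$ and $R^i(\pi|_{X'})_*\mathcal{O}_{X'}(L|_{X'})=0$ for $i>0$. Both follow from the same mechanism: pull $L$ (respectively $L|_{X'}$) back to $Y$ (respectively $Y'$), rewrite
\[
f^*L+A\;\sim_{\mathbb{R}}\; K_Y+\{B_Y\}+B_Y^{=1}+f^*(L-\omega),
\]
apply SNC vanishing using that $L-\omega$ is nef and log big over $S$ with respect to $[X,\omega]$, and push down by the Leray spectral sequence via the identification of $f_*\mathcal{O}_Y(A)$. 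The passage to $X'$ requires only that $L-\omega$ remains nef and log big with respect to $[X',\omega']$, which is immediate from (i) because the qlc strata of $[X',\omega']$ form a subcollection of those of $[X,\omega]$.

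The principal obstacle is the bookkeeping inside part (i): arranging the blow-ups of $M$ so that $Y'$ becomes a bona fide SNC divisor and $(Y',B_{Y'})$ a globally embedded SNC pair whose strata correspond exactly to the qlc strata of $[X,\omega]$ contained in $X'\setminus X_{-\infty}$, while simultaneously preserving the identification $f_*\mathcal{O}_Y(A)\cong \mathcal{I}_{X_{-\infty}}$ and the rounding compatibility $A|_{Y'}=\lceil -(B_{Y'}^{<1})\rceil-\lfloor B_{Y'}^{>1}\rfloor$. Once this technical preparation is carried out, the vanishing in (ii) becomes essentially a formal consequence of the SNC vanishing theorem applied twice in combination with the long exact sequence.
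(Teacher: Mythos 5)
First, a remark on the comparison itself: the paper does not prove Theorem \ref{f-thm4.4}; it refers to \cite[Theorem 3.8]{fujino-reid-fukuda} and \cite[Section 6.3]{fujino-foundation}. Your architecture (blow up the ambient space so that the union $Y'$ of strata lying over $X'$ becomes a union of components of $Y$, restrict $B_Y$ and the divisor $A=\lceil -(B_Y^{<1})\rceil-\lfloor B_Y^{>1}\rfloor$ to $Y'$, and push forward the restriction sequence) is exactly the standard one from those references. However, there is a genuine gap at the key step of part (i). You claim that $R^1f_*$ of the kernel $\mathcal O_{Y''}(A''-Y'|_{Y''})$ (where $Y''=Y-Y'$) vanishes ``as an instance of SNC vanishing.'' This is false in general: the relevant divisor differs from $K_{Y''}+(\text{boundary})$ only by the $f$-numerically trivial class $-f^*\omega$, so no relative vanishing theorem applies, and $R^1f_*$ is typically nonzero (e.g.\ when $f$ contracts a stratum which is a curve of positive genus). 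What the actual proof uses is the strict-support (torsion-freeness) theorem: every associated prime of $R^qf_*\mathcal O_{Y''}(A''-Y'|_{Y''})$ is the generic point of the $f$-image of a stratum of $(Y'', B_{Y''}+Y'|_{Y''})$, and by the choice of $Y'$ no such stratum maps into $X'$; since the connecting homomorphism starts from a sheaf supported on $X'$, it must vanish. Replacing that support argument by an outright $R^1f_*=0$ is not a repairable shortcut --- it is the place where the whole strata bookkeeping earns its keep.

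Part (ii) also does not close as written. From the sequence $0\to \mathcal I_{X'}\otimes\mathcal O_X(L)\to\mathcal O_X(L)\to\mathcal O_{X'}(L|_{X'})\to 0$, the two vanishings you propose give $R^i\pi_*(\mathcal I_{X'}\otimes\mathcal O_X(L))=0$ only for $i\geq 2$; for $i=1$ you additionally need surjectivity of $\pi_*\mathcal O_X(L)\to\pi_*\mathcal O_{X'}(L|_{X'})$, which is not formal. Moreover, when $X_{-\infty}\neq\emptyset$ the vanishing theorem yields $R^i\pi_*(\mathcal I_{X_{-\infty}}\otimes\mathcal O_X(L))=0$, not $R^i\pi_*\mathcal O_X(L)=0$, so the first of your two inputs is not even available. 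The standard proof avoids both problems by identifying $\mathcal I_{X'}\otimes\mathcal O_X(L)$ directly as $f_*\mathcal O_{Y''}(A''-Y'|_{Y''}+f^*L)$ (this is the output of the adjunction step), rewriting the divisor there as $K_{Y''}+(\text{boundary})+f^*(L-\omega)$, and applying the relative vanishing theorem for SNC pairs to that single sheaf, using that $L-\omega$ is nef over $S$ and big over $S$ on the images of the relevant strata.
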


For the proof of Theorem \ref{f-thm4.4}, see, 
for example, 
\cite[Theorem 3.8]{fujino-reid-fukuda} and \cite[Section 6.3]{fujino-foundation}. 
We can slightly generalize Theorem \ref{f-thm4.4} (ii) as follows. 

\begin{thm}\label{f-thm4.5}
Let $[X, \omega]$, $X'$, and $\pi:X\to S$ be as in Theorem \ref{f-thm4.4}. 
Let $L$ be a Cartier divisor on $X$ such that 
$L-\omega$ is 
nef over $S$ and that 
$(L-\omega)|_W$ is big over $S$ for any qlc stratum $W$ of $[X, \omega]$ which 
is not contained in $X'$. 
Then $R^i\pi_*(\mathcal I_{X'}\otimes \mathcal O_X(L))=0$ for every 
$i>0$, where $\mathcal I_{X'}$ is the defining ideal 
sheaf of $X'$ on $X$. 
\end{thm}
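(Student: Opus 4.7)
The plan is to run essentially the same argument as the proof of Theorem \ref{f-thm4.4} (ii), simply noting that the Kollár-type vanishing theorem for globally embedded simple normal crossing pairs used there is flexible enough to accommodate bigness only on strata not contained in the excluded divisor.

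I first choose a quasi-log resolution $f\colon(Y,B_Y)\to X$ of $[X,\omega]$. After standard modifications on $Y$ (blowing up along $f^{-1}(X')$ and re-establishing the simple normal crossing property of the ambient data), I may assume there is a reduced divisor $N$ on $Y$, a union of some components of $Y$ together with some strata of $(Y,B_Y)$, satisfying: $f(N)=X'$ set-theoretically, the strata of $(Y,B_Y)$ not contained in $N$ are exactly those mapping onto qlc strata of $[X,\omega]$ not contained in $X'$, and
\begin{equation}
\mathcal I_{X'}\otimes\mathcal O_X(L)\simeq f_*\mathcal O_Y\bigl(\lceil -(B_Y^{<1})\rceil-\lfloor B_Y^{>1}\rfloor-N+f^*L\bigr).
\end{equation}
The existence of such an $N$ is essentially the content of Theorem \ref{f-thm4.4} (i) applied to $[X',\omega|_{X'}]$, combined with the defining ideal sheaf formula of Definition \ref{f-def4.2} (2) and the projection formula.

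Now set $A:=f^*L-(K_Y+B_Y)$, so that $A\sim_{\mathbb R}f^*(L-\omega)$. Then $A$ is nef over $S$ since $L-\omega$ is. The hypothesis that $(L-\omega)|_W$ is big over $S$ for every qlc stratum $W$ of $[X,\omega]$ not contained in $X'$ translates, via the characterization of $N$ above, into bigness over $S$ of $A$ restricted to every stratum of $(Y,B_Y)$ not contained in $N$. The Kollár-type vanishing theorem for globally embedded simple normal crossing pairs in its strongest form (see \cite[Chapter 5]{fujino-foundation}) then gives
\begin{equation}
R^i(\pi\circ f)_*\mathcal O_Y\bigl(\lceil -(B_Y^{<1})\rceil-\lfloor B_Y^{>1}\rfloor-N+f^*L\bigr)=0
\end{equation}
for every $i>0$, together with the vanishing of the corresponding higher direct images $R^jf_*$ for $j>0$. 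The Leray spectral sequence and the isomorphism above then yield the desired vanishing $R^i\pi_*(\mathcal I_{X'}\otimes\mathcal O_X(L))=0$ for every $i>0$.

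The main obstacle is concentrated in the very first step: arranging the quasi-log resolution and the divisor $N$ so that the displayed ideal sheaf identity holds \emph{and} the strata of $(Y,B_Y)$ split cleanly into those inside $N$ (mapping into $X'$) and those outside $N$ (mapping onto qlc strata not contained in $X'$, where the bigness hypothesis applies). Once this compatibility is secured, the remainder is a routine transcription of the proof of Theorem \ref{f-thm4.4} (ii), using the strongest available stratum-wise form of the SNC vanishing theorem in place of the ``nef and log big'' version.
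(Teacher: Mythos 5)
Your proposal follows the same route the paper intends: the paper offers no separate argument, stating only that the theorem is obvious from the proof of Theorem \ref{f-thm4.4}, and your reconstruction---rerunning that proof after arranging, by the standard blow-ups, that the strata mapping into $X'$ form a union of components, and then observing that the stratum-wise vanishing theorem for simple normal crossing pairs only needs bigness over the qlc strata not contained in $X'$---is exactly that. One small correction: $R^jf_*$ of the relevant sheaf need not vanish for $j>0$ (it is merely torsion-free, with associated primes at generic points of images of strata), but this is harmless because the strong form of the vanishing theorem in \cite[Chapter 5]{fujino-foundation} directly gives $R^i\pi_*\bigl(f_*\mathcal O_Y(\cdots)\bigr)=0$, which is all you need once the ideal-sheaf identity is in place.
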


Theorem \ref{f-thm4.5} is obvious by the proof of Theorem \ref{f-thm4.4}. 
For a related topic, see \cite[Remark 5.2]{fujino-slc}. 
Theorem \ref{f-thm4.5} will play a crucial role in the proof of Theorem \ref{f-thm1.6} 
in Section \ref{f-sec6}. 
 
Finally, 
we prepare a useful lemma, which is new, for the proof of Theorem \ref{f-thm1.3}. 

\begin{lem}\label{f-lem4.6}
Let $[X, \omega]$ be a qlc pair such that 
$X$ is irreducible. 
Let $E$ be an effective $\mathbb R$-Cartier divisor on $X$. 
This means that 
$$
E=\sum _{i=1}^k e_i E_i
$$ 
where $E_i$ is an effective Cartier divisor on $X$ and $e_i$ is a positive 
real number for every $i$. 
Then we can give a quasi-log structure to $[X, \omega+E]$, 
which coincides with the original quasi-log structure of $[X, \omega]$ outside 
$\Supp E$. 
\end{lem}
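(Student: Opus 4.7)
The plan is to modify the given quasi-log resolution $f\colon(Y,B_Y)\to X$ of $[X,\omega]$ into a new one $f'\colon(Y',B_{Y'})\to X$ serving as a quasi-log resolution for $[X,\omega+E]$ and agreeing with the original over $X\setminus\Supp E$. Let $M\supset Y$ be the smooth ambient space with $\mathbb R$-divisor $D$ satisfying $B_Y=D|_Y$ and $\Supp(Y+D)$ simple normal crossing; the qlc hypothesis on $[X,\omega]$ gives $\mathcal O_X\cong f_*\mathcal O_Y(\lceil-B_Y^{<1}\rceil)$.

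I would first pull back each $E_i$ by $f$ to obtain the effective Cartier divisor $f^*E_i$ on $Y$, and set $f^*E=\sum_i e_i f^*E_i$. Applying Hironaka's embedded resolution to the ambient $M$ (locally extending each $f^*E_i$ to a Cartier divisor on $M$ and patching), I produce a projective birational morphism $g\colon M'\to M$ from a smooth $M'$ such that: (i)~$g$ is an isomorphism outside a proper closed subset of $M$ contained in the union of $\Supp f^*E$ with the non-SNC locus of the combined data; (ii)~the strict transform $Y':=g^{-1}_*Y$ is a simple normal crossing divisor on $M'$; and (iii)~there is an $\mathbb R$-divisor $D'$ on $M'$, with $\Supp(Y'+D')$ simple normal crossing and $Y',D'$ sharing no components, so that setting $B_{Y'}:=D'|_{Y'}$ and $f':=f\circ g|_{Y'}$ one has
\begin{equation*}
K_{Y'}+B_{Y'}\sim_{\mathbb R}f'^*(\omega+E),
\end{equation*}
with $B_{Y'}$ equal to the (birational) pullback of $B_Y$ on $Y'\setminus g^{-1}(\Supp f^*E)$.

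Next I would define $X'_{-\infty}\subset X$ to be the closed subscheme whose defining ideal sheaf is the image of the composition
\begin{equation*}
f'_*\mathcal O_{Y'}\bigl(\lceil-B_{Y'}^{<1}\rceil-\lfloor B_{Y'}^{>1}\rfloor\bigr)\hookrightarrow f'_*\mathcal O_{Y'}\bigl(\lceil-B_{Y'}^{<1}\rceil\bigr)\to\mathcal O_X,
\end{equation*}
and declare the qlc strata of $[X,\omega+E]$ to be the $f'$-images of those $(Y',B_{Y'})$-strata not contained in $X'_{-\infty}$. Conditions (1)--(3) of Definition \ref{f-def4.2} then follow by construction. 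Over $X\setminus\Supp E$, the morphism $g$ is an isomorphism and $B_{Y'}$ equals the pullback of $B_Y$, so the qlc identity for $[X,\omega]$ forces $X'_{-\infty}\subset\Supp E$ and the qlc strata of $[X,\omega+E]$ restrict to the original collection $\{C\}$.

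The technical core is the second paragraph: engineering $g\colon M'\to M$ so that $K_{Y'}+B_{Y'}\sim_{\mathbb R}f'^*(\omega+E)$ holds with $B_{Y'}$ prescribed and with coefficients unchanged off $\Supp E$, while keeping $Y'$ simple normal crossing and correctly absorbing the relative canonical contributions $K_{Y'/Y}$ into $D'$. The irreducibility of $X$ enters here to guarantee a unique $(Y,B_Y)$-stratum dominating $X$, ensuring that the new boundary contributions from $g^*f^*E$ affect only strata already mapping into $\Supp E$ and do not disturb the structure away from $E$.
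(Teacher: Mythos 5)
Your plan runs into a genuine obstruction at its very first step: the components of $Y$ that $f$ maps into $\Supp E$. A quasi-log resolution $f\colon (Y,B_Y)\to X$ is in general not birational, and $Y$ typically has irreducible components (and strata) whose images lie inside $\Supp E$; on such a component the local equation of $E_i$ pulls back to an identically vanishing function, so $f^*E_i$ is \emph{not} an effective Cartier divisor on $Y$. Passing to strict transforms under an embedded resolution $g\colon M'\to M$ does not remove these components, so your $Y'$ still contains them and $f'^*(\omega+E)$ still cannot be represented on them by a boundary of the required form with controlled coefficients. The paper's proof is organized around exactly this point: using \cite[Proposition 4.1]{fujino-pull} one first modifies the resolution so that the union $Z''$ of all strata mapping into $\Supp E$ becomes a union of whole irreducible components of $Z$, and then one \emph{discards} $Z''$, working with $Z'=Z-Z''$ and $\Delta_{Z'}$ defined by adjunction, $K_{Z'}+\Delta_{Z'}=(K_Z+\Delta_Z)|_{Z'}$; on $Z'$ no stratum maps into $\Supp E$, so ${f'}^*E$ is an honest $\mathbb R$-Cartier divisor. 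Your construction keeps all of $Y$ and never confronts this.

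The second gap is your assertion that conditions (1)--(3) of Definition \ref{f-def4.2} ``follow by construction.'' Condition (2) is the actual content of the lemma: after discarding $Z''$ one must verify that
\[
{f'}_*\mathcal O_{Z'}\bigl(\lceil -(\Delta_{Z'}+{f'}^*E)^{<1}\rceil-\lfloor (\Delta_{Z'}+{f'}^*E)^{>1}\rfloor\bigr)
\]
is still an ideal sheaf of $\mathcal O_X$, i.e., injects into $f_*\mathcal O_Z(\lceil-\Delta_Z^{<1}\rceil)\simeq\mathcal O_X$. The paper obtains this from the inequality $-\lfloor(\Delta_{Z'}+{f'}^*E)^{>1}\rfloor\leq -Z''|_{Z'}$ (the discarded components appear with coefficient one in $\Delta_{Z'}$ by adjunction and are pushed strictly above one by ${f'}^*E$) combined with $\mathcal O_{Z'}(-Z''|_{Z'})\subset\mathcal O_Z$; nothing in your proposal supplies this step, and without it you have no non-qlc ideal sheaf and hence no quasi-log structure. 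A further small inaccuracy: irreducibility of $X$ does not give a \emph{unique} stratum of $(Y,B_Y)$ dominating $X$ --- there may be several components of $Y$ mapping onto $X$ --- so the closing sentence of your argument does not establish what it claims.
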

For the details of the quasi-log structure of $[X, \omega+E]$, see 
the construction in the proof below. 

\begin{proof}
Let $f:(Z, \Delta_Z)\to [X, \omega]$ be a quasi-log resolution, where 
$(Z, \Delta_Z)$ is a globally embedded simple normal crossing pair. 
By taking some suitable blow-ups, we may assume that 
the union of all strata of $(Z, \Delta_Z)$ mapped 
to $\Supp E$, which is denoted by $Z''$, is a union of 
some irreducible components of $Z$ (see \cite[Proposition 4.1]{fujino-pull} 
and \cite[Section 6.3]{fujino-foundation}). 
We put $Z'=Z-Z''$ and 
$K_{Z'}+\Delta_{Z'}=(K_Z+\Delta_Z)|_{Z'}$. 
We may further assume that 
$(Z', \Delta_{Z'}+{f'}^*E)$ is a globally embedded simple normal crossing pair, 
where $f'=f|_{Z'}: Z'\to X$. 
By construction, we have a natural inclusion 
\begin{equation}
\mathcal O_{Z'} (\lceil -(\Delta_{Z'}+{f'}^*E)^{<1}\rceil -
\lfloor (\Delta_{Z'}+{f'}^*E)^{>1}\rfloor)\subset \mathcal 
O_Z(\lceil -\Delta_Z^{<1}\rceil). 
\end{equation}
This is because 
\begin{equation}
-\lfloor (\Delta_{Z'}+f'^*E)^{>1}\rfloor \leq -Z''|_{Z'} 
\end{equation}
and 
\begin{equation}
\mathcal O_{Z'}(-Z''|_{Z'})\subset \mathcal O_Z. 
\end{equation}
Thus, we have 
\begin{equation}
f'_*\mathcal O_{Z'} (\lceil -(\Delta_{Z'}+{f'}^*E)^{<1}\rceil -
\lfloor (\Delta_{Z'}+{f'}^*E)^{>1}\rfloor)
\subset f_*\mathcal 
O_Z(\lceil -\Delta_Z^{<1}\rceil)\simeq \mathcal O_X. 
\end{equation}
By putting 
\begin{equation}
\mathcal I_{X_{-\infty}}= 
{f'}_*\mathcal O_{Z'} (\lceil -(\Delta_{Z'}+{f'}^*E)^{<1}\rceil -
\lfloor (\Delta_{Z'}+{f'}^*E)^{>1}\rfloor), 
\end{equation} 
$f': (Z', \Delta_{Z'}+{f'}^*E)\to [X, \omega+E]$ gives a quasi-log structure to 
$[X, \omega+E]$. 
By construction, it coincides with 
the original quasi-log structure of $[X, \omega]$ outside $\Supp E$. 
\end{proof}

\section{Semi-log canonical surfaces}\label{f-sec5}

In this section, we prove Conjecture \ref{f-conj1.1} for surfaces. 

\begin{thm}\label{f-thm5.1}
Let $(X, \Delta)$ be a projective semi-log canonical 
surface and let $D$ be a Cartier divisor on $X$. 
We put $A=D-(K_X+\Delta)$.  
Assume that $(A^2\cdot {X_i})>4$ for every irreducible component 
$X_i$ of $X$ and 
that $A\cdot C\geq 2$ for every curve $C$ on $X$. 
Then the complete linear system $|D|$ is basepoint-free. 
\end{thm}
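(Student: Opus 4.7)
The plan is, for each closed point $P\in X$, to establish $H^1(X, \mathcal I_P\otimes \mathcal O_X(D))=0$; by the long exact sequence attached to $0\to \mathcal I_P\otimes \mathcal O_X(D)\to \mathcal O_X(D)\to \mathcal O_X(D)\otimes \mathbb C(P)\to 0$, this gives the surjectivity of evaluation at $P$ and hence the basepoint-freeness of $|D|$. Since $(X,\Delta)$ is semi-log canonical, $[X, K_X+\Delta]$ carries a natural quasi-log structure with empty non-qlc locus whose qlc strata are exactly the slc strata of $(X,\Delta)$. The hypotheses imply that $A=D-(K_X+\Delta)$ is nef on $X$ and log big with respect to $[X, K_X+\Delta]$: the condition $A^2\cdot X_i>4$ forces bigness on each two-dimensional qlc stratum, while $A\cdot C\geq 2>0$ supplies both nefness and bigness on every one-dimensional qlc stratum. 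I will split the argument according to where $P$ sits among the slc centers of $(X,\Delta)$.

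If $P$ is itself a zero-dimensional slc center, then Theorem~\ref{f-thm4.4}(ii) applied with $X'=\{P\}$ and $L=D$ delivers the vanishing at once. If $P$ lies on some one-dimensional slc center $C$ without being a zero-dimensional slc center, I pass to the qlc pair $[C, (K_X+\Delta)|_C]$ produced by adjunction (Theorem~\ref{f-thm4.4}(i)) and work with the short exact sequence
$$0\to \mathcal I_C\otimes \mathcal O_X(D)\to \mathcal I_P\otimes \mathcal O_X(D)\to \mathcal J_{P,C}\otimes \mathcal O_C(D|_C)\to 0,$$
where $\mathcal J_{P,C}$ denotes the ideal of $P$ in $C$. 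The $H^1$ of the first term vanishes by Theorem~\ref{f-thm4.4}(ii) with $X'=C$. The $H^1$ of the quotient is handled by the qlc analogue of Theorem~\ref{f-thm2.1}: since $\deg A|_C\geq 2>1$, either $\{P\}$ is already a zero-dimensional qlc stratum of $[C,\omega|_C]$ and Theorem~\ref{f-thm4.4}(ii) applies directly, or Lemma~\ref{f-lem4.6} applied with $E=cP$ for a suitable $0<c\leq 1$ enlarges the qlc structure so that $\{P\}$ becomes a qlc stratum without destroying the nefness and bigness of the residual class.

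The main case is when $P$ lies on no positive-codimension slc center, so $P$ belongs to a unique component $X_i$ and $(X,\Delta)$ is klt at $P$. Here $\{P\}$ must be manufactured as a qlc stratum by a perturbation. Since $(A|_{X_i})^2>4$, a Riemann--Roch / dimension-count argument yields, for any $\lambda$ with $2/\sqrt{A^2\cdot X_i}<\lambda<1$ and every sufficiently divisible $m$, a section of $\mathcal O_{X_i}(m\lambda A)$ vanishing at $P$ to order greater than $2m$; rescaling produces an effective $\mathbb Q$-divisor $E\sim_{\mathbb Q}\lambda A|_{X_i}$ with $\mult_P E>2=\dim X_i$. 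Kawamata-style tie-breaking then delivers a modification $E'\sim_{\mathbb Q}\lambda' A|_{X_i}$ with $\lambda'<1$ and a coefficient $c\leq 1$ such that $(X_i,\Delta|_{X_i}+cE')$ has $\{P\}$ as a minimal lc center. Restricting $[X,K_X+\Delta]$ to $X_i$ by Theorem~\ref{f-thm4.4}(i) and applying Lemma~\ref{f-lem4.6} with $cE'$ produces a qlc pair $[X_i, (K_X+\Delta)|_{X_i}+cE']$ in which $\{P\}$ is a qlc stratum, and
$$D|_{X_i}-\bigl((K_X+\Delta)|_{X_i}+cE'\bigr)\sim_{\mathbb Q}(1-c\lambda')A|_{X_i}$$
is still nef and log big by the original positivity of $A$. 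Theorem~\ref{f-thm4.4}(ii) then yields $H^1(X_i, \mathcal I_P\otimes \mathcal O_{X_i}(D))=0$, and transferring this back to $X$ through the standard sequence for $X_i\hookrightarrow X$ (together with a further application of Theorem~\ref{f-thm4.4}(ii) on the union of the remaining components) finishes the proof.

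The principal obstacle is this last case: producing $E$ with controlled multiplicity at a potentially klt-singular point, running tie-breaking to ensure that $\{P\}$ rather than a curve through $P$ is the minimal lc center of the perturbed pair, and then threading the perturbation through adjunction and Lemma~\ref{f-lem4.6} on a potentially reducible slc ambient while keeping the new $L-\omega$ nef and log big on the enlarged family of qlc strata.
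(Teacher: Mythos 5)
Your overall strategy coincides with the paper's: stratify by the position of $P$ relative to the slc centers, use adjunction and the vanishing theorem for quasi-log schemes when $P$ already lies on a center, and in the main case restrict to the component $X_i$, produce $B\sim_{\mathbb R}A|_{X_i}$ with $\mult_PB>2$, and perturb until $\{P\}$ becomes a qlc center. However, the two points you defer to ``the principal obstacle'' are exactly where the content of the proof lies, and your proposal does not supply them. First, the tie-breaking from a one-dimensional minimal lc center $L=W_0$ down to $\{P\}$ is not a soft statement here: one must add $c'P$ on $L$ with $c'=1-\mult_P(\Delta'|_L)$ and then verify
$$\deg\bigl(D|_L-(K_X+\Delta+cB)|_L-c'P\bigr)>0,$$
which the paper does by the explicit computation \eqref{eq517}: writing $\mult_PB=2+a$ and $c=(1+\delta-\alpha)/(2+a)$, the degree is at least $\tfrac{1}{2+a}(a+2\alpha+a\delta)\geq\tfrac{a}{2+a}>0$. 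This uses $A\cdot L\geq 2$ in an essential, quantitative way; with only $A\cdot L>0$, or with a generic ``Kawamata-style tie-breaking delivers $\lambda'<1$'' invocation, the inequality is not guaranteed. Skipping this verification skips the reason the constant $2$ appears in the hypothesis.

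Second, your Riemann--Roch construction of a divisor with $\mult_PE>2$, and the inference that the lc threshold is then $<1$, are valid only at a \emph{smooth} point of $X_i$; at a klt singular point both the parameter count and the multiplicity-versus-threshold comparison fail. The paper devotes a separate step to this case: it passes to the minimal resolution $\pi:Y\to X$ of $P$, where $K_Y+\Delta_Y=\pi^*(K_X+\Delta)$ still has effective boundary (this is where minimality is used), constructs $B'=\pi^*B$ with $\mult_QB'>2$ at a smooth point $Q\in\pi^{-1}(P)$, and then handles the fact that $\pi^*A$ has degree zero on $\pi$-exceptional curves: if some one-dimensional qlc center $C$ of the perturbed structure satisfies $(1-c)\pi^*A\cdot C=0$, then $C\subset\pi^{-1}(P)$, so $P$ is already a qlc center downstairs and one concludes directly; otherwise the smooth-point argument applies upstairs. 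Your proposal has no counterpart to this step. A minor further imprecision: when $P$ lies on a one-dimensional slc center, the threshold for the induced quasi-log structure on the curve satisfies $0<c<2$ rather than $c\leq 1$ (the quasi-log resolution of the center can be $2\colon\!1$ onto it), so the positivity you need is $A\cdot W\geq 2>c$ --- again the constant $2$, not merely $>1$.
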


\begin{rem}\label{f-rem5.2}
By assumption and Nakai's ampleness criterion for $\mathbb R$-divisors 
(see \cite{campana-peternell}), $A$ is ample in Theorem \ref{f-thm5.1}. 
However, we do not use the ampleness of $A$ in the proof of Theorem \ref{f-thm5.1}. 
\end{rem}

Our proof of Theorem \ref{f-thm5.1} uses the theory of quasi-log schemes. 

\begin{proof} 
We will prove 
that the restriction map 
$$
H^0(X, \mathcal O_X(D))\to \mathcal O_X(D)\otimes \mathbb C(P)
$$ 
is surjective for every $P\in X$. 
\begin{step}[Quasi-log structure]\label{step1}
By \cite[Theorem 1.2]{fujino-slc}, we can 
take a quasi-log resolution $f:(Z, \Delta_Z)\to [X, K_X+\Delta]$. 
Precisely speaking, 
$(Z, \Delta_Z)$ is a globally embedded simple normal 
crossing pair such that 
$\Delta_Z$ is a subboundary $\mathbb R$-divisor on $Z$ with the following properties. 
\begin{itemize}
\item[(i)] $K_Z+\Delta_Z\sim _{\mathbb R} f^*(K_X+\Delta)$. 
\item[(ii)] the natural map $\mathcal O_X\to f_*\mathcal O_Z(\lceil -\Delta^{<1}_Z\rceil)$ 
is an isomorphism. 
\item[(iii)] $\dim Z=2$. 
\item[(iv)] $W$ is a semi-log canonical stratum of $(X, \Delta)$ if and only if 
$W=f(S)$ for some stratum $S$ of $(Z, \Delta_Z)$. 
\end{itemize} 
It is worth mentioning that $f:Z\to X$ is not necessarily birational. 
This step is nothing but \cite[Theorem 1.2]{fujino-slc}. 
\end{step}
\begin{step}
Assume that $P$ is a zero-dimensional 
semi-log canonical center of $(X, \Delta)$. 
Then $H^i(X, \mathcal I_P\otimes \mathcal O_X(D))=0$ for every $i>0$, 
where $\mathcal I_P$ is the defining ideal 
sheaf of $P$ on $X$ (see \cite[Theorem 1.11]{fujino-slc} and 
Theorem \ref{f-thm4.4}). 
Therefore, the restriction map 
$$
H^0(X, \mathcal O_X(D))\to \mathcal O_X(D)\otimes \mathbb C(P)
$$ 
is surjective. 
\end{step}
From now on, we may assume that $P$ is not a zero-dimensional 
semi-log canonical center of $(X, \Delta)$. 
\begin{step}\label{step3} 
Assume that there exists a one-dimensional 
semi-log canonical center $W$ of $(X, \Delta)$ such that 
$P\in W$. 
Since $P$ is not a zero-dimensional 
semi-log canonical center of $(X, \Delta)$, $W$ is normal, that is, 
smooth, at $P$ by \cite[Corollary 3.5]{fujino-slc}. 
By adjunction (see Theorem \ref{f-thm4.4}), 
$[W, (K_X+\Delta)|_W]$ has a quasi-log structure with only quasi-log 
canonical singularities 
induced by the quasi-log 
structure $f:(Z, \Delta_Z)\to [X, K_X+\Delta]$ constructed in Step \ref{step1}. 
Let $g:(Z', \Delta_{Z'})\to [W, (K_X+\Delta)|_W]$ be the induced quasi-log resolution. 
We put 
\begin{equation}
c=\underset{t\geq 0}{\sup}\left\{t \left|
\begin{array}{l}  {\text{the normalization of 
$(Z', \Delta_{Z'}+tg^*P)$ is}}\\
{\text{sub log canonical.}} 
\end{array}\right. \right\}. 
\end{equation} 
Then, by \cite[Lemma 3.16]{fujino-reid-fukuda}, we obtain that $0<c<2$. 
Note that $P$ is a Cartier divisor on $W$. 
Let us consider $g:(Z', \Delta_{Z'}+cg^*P)\to [W, (K_X+\Delta)|_W+cP]$, 
which defines a quasi-log structure. 
Then, by construction, $P$ is a qlc center of $[W, (K_X+\Delta)|_W+cP]$. 
Moreover, 
we see that 
\begin{equation}
(D|_W-((K_X+\Delta)|_W+cP))=(A\cdot W)-c>0
\end{equation} by assumption. 
Therefore, we obtain that 
\begin{equation}
H^i(W, \mathcal I_P\otimes \mathcal O_W(D))=0
\end{equation} 
for every $i>0$ by Theorem \ref{f-thm4.4}, where 
$\mathcal I_P$ is the defining ideal sheaf of $P$ on $W$. 
Thus, the restriction map 
\begin{equation}\label{eq54}
H^0(W, \mathcal O_W(D))\to \mathcal O_W(D)\otimes \mathbb C(P)
\end{equation} 
is surjective. On the other hand, 
by Theorem \ref{f-thm4.4} again, 
we have that 
\begin{equation}
H^i(X, \mathcal I_W\otimes \mathcal O_X(D))=0
\end{equation} for 
every $i>0$, 
where $\mathcal  I_W$ is the defining ideal sheaf of $W$ on $X$. 
This implies that 
the restriction map 
\begin{equation}\label{eq56}
H^0(X, \mathcal O_X(D))\to H^0(W, \mathcal O_W(D))
\end{equation} 
is surjective. 
By combining \eqref{eq54} with \eqref{eq56}, 
the desired restriction map 
\begin{equation}
H^0(X, \mathcal O_X(D))\to \mathcal O_X(D)\otimes \mathbb C(P)
\end{equation} 
is surjective. 
\end{step}
Therefore, from now on, we may assume that no one-dimensional 
semi-log canonical centers of $(X, \Delta)$ contain $P$. 
\begin{step}\label{step4}
In this step, we assume that $P$ is a smooth point of $X$. 
Let $X_0$ be the unique irreducible component of $X$ containing $P$. 
By adjunction (see Theorem \ref{f-thm4.4}), 
$[X_0, (K_X+\Delta)|_{X_0}]$ has a quasi-log structure with only quasi-log 
canonical singularities 
induced by 
the quasi-log structure $f:(Z, \Delta_Z)\to [X, K_X+\Delta]$ constructed in Step \ref{step1}. 
By Theorem \ref{f-thm4.4}, 
\begin{equation}
H^i(X, \mathcal I_{X_0}\otimes \mathcal O_X(D))=0
\end{equation} 
for every $i>0$, where $\mathcal I_{X_0}$ is the defining ideal sheaf of 
$X_0$ on $X$. 
Therefore, the restriction map 
\begin{equation}\label{eq59}
H^0(X, \mathcal O_X(D))\to H^0(X_0, \mathcal O_{X_0}(D))
\end{equation} 
is surjective. Thus, it is sufficient to prove that the natural restriction map 
\begin{equation}
H^0(X_0, \mathcal O_{X_0}(D))\to \mathcal O_{X_0}(D)\otimes \mathbb C(P)
\end{equation}  
is surjective. 
We put $A_0=A|_{X_0}$. 
Since $A_0^2>4$, we can find an effective $\mathbb R$-Cartier divisor 
$B$ on $X_0$ such that 
$\mult_P B>2$ and that 
$B\sim _{\mathbb R} A_0$. 
We put $U=X_0\setminus \Sing X_0$ and 
define  
\begin{equation}
c=\max\{ t\geq 0\, |\, (U, \Delta|_U +tB|_U) \ 
\text{is log canonical at $P$}\}. 
\end{equation} 
Then we obtain that $0<c<1$ since $\mult _P B>2$. 
By Lemma \ref{f-lem4.6}, 
we have a quasi-log structure on $[X_0, (K_X+\Delta)|_{X_0} +cB]$. 
By construction, there is a qlc center $W$ of $[X_0, (K_X+\Delta)|_{X_0}+cB]$ passing 
through $P$. 
Let $X'$ be the union of the non-qlc locus 
of $[X_0, (K_X+\Delta)|_{X_0}+cB]$ and the minimal qlc center 
$W_0$ of $[X_0, (K_X+\Delta)|_{X_0}+cB]$ passing through $P$. 
Note that $D|_{X_0}-((K_X+\Delta)|_{X_0}+cB)\sim _{\mathbb R} (1-c)A_0$. 
Then, by Theorem \ref{f-thm4.4}, 
\begin{equation}\label{eq512}
H^i(X_0, \mathcal I_{X'}\otimes \mathcal O_{X_0}(D))=0
\end{equation}
for every $i>0$, where $\mathcal I_{X'}$ is the defining ideal sheaf of $X'$ on $X_0$. 
\begin{case}\label{case1}
If $\dim W_0=0$, 
then $P$ is isolated in $\Supp \mathcal O_{X_0}/ \mathcal I_{X'}$. 
Therefore, the restriction map 
\begin{equation}\label{eq513}
H^0(X_0, \mathcal O_{X_0}(D))\to \mathcal O_{X_0}(D)\otimes \mathbb C(P)
\end{equation} 
is surjective. 
\end{case}
\begin{case}\label{case2}
If $\dim W_0=1$, then let us consider the quasi-log structure of $[X', ((K_X+
\Delta)|_{X_0}+cB)|_{X'}]$ induced by 
the quasi-log structure of 
$[X_0, (K_X+\Delta)|_{X_0}+cB]$ constructed above by Lemma \ref{f-lem4.6} 
(see Theorem \ref{f-thm4.4} (i)). 
From now on, we will see that we can take $0<c'\leq 1$ such that 
$P$ is a zero-dimensional 
qlc center of $[X', ((K_X+\Delta)|_{X_0}+cB)|_{X'}+c'P]$ as in Step \ref{step3}. 
By assumption, 
$(X, \Delta+cB)$ is plt in a neighborhood of $P$. 
We put $\mult _PB=2+a$ with $a>0$. 
We write $\Delta+cB=L+\Delta'$, where 
$L=W_0$ is the unique one-dimensional 
log canonical center of $(X, \Delta)$ passing through $P$ and $\Delta'=\Delta+cB-L$. 
We put $\mult _P(\Delta+cB)=1+\delta$ with $\delta \geq 0$, equivalently, 
$\delta=\mult _P \Delta' \geq 0$. 
Note that 
\begin{equation}
1+\delta= \mult _P (\Delta+cB)=\mult _P\Delta+ c(2+a). 
\end{equation} 
Therefore, we have 
\begin{equation}
c=\frac{1+\delta-\alpha}{2+a}, 
\end{equation} 
where $\alpha=\mult _P\Delta \geq 0$. 
We also note that 
\begin{equation}
\delta\leq \mult _P (\Delta' |_L)<1. 
\end{equation} 
Then, we can choose $c'= 1-\mult _P (\Delta'|_L)$. 
This is because $(X, \Delta+cB+c' H)$ is log canonical 
in a neighborhood of $P$ but is not plt at $P$, 
where $H$ is a general smooth curve passing through $P$. 

In this situation, we have 
\begin{equation}\label{eq517}
\begin{split}
&\deg (D|_L -(K_X+\Delta+cB)|_L -c'P) \\
&\geq \left( 1-\frac{1+\delta-\alpha}{2+a} \right) \cdot 2 -(1-\delta)\\ 
& = \frac{1}{2+a} ((2+a-1-\delta+\alpha) \cdot 2 -(2+a)(1-\delta)) \\ 
&= \frac{1}{2+a}(a+2\alpha +a\delta) \\ 
& \geq \frac{a}{2+a} >0. 
\end{split} 
\end{equation}
Thus, by Theorem \ref{f-thm4.4}, 
\begin{equation}
H^i(X', \mathcal I_{X''}\otimes \mathcal O_{X'}(D))=0
\end{equation}  
for every $i>0$, where $X''$ is the union of the non-qlc locus of $[X', 
((K_X+\Delta)|_{X_0} +cB)|_{X'}+c'P]$ and $P$, and 
$\mathcal I_{X''}$ is the defining ideal sheaf of $X''$ on $X'$. 
Thus, we have that 
\begin{equation}\label{eq519}
H^0(X', \mathcal O_{X'} (D))\to \mathcal O_{X'}(D)\otimes 
\mathcal O_{X'} / \mathcal I_{X''}
\end{equation} 
is surjective. 
Note that $P$ is isolated in $\Supp \mathcal O_{X'}/ \mathcal I_{X''}$. 
Therefore, we obtain surjections  
\begin{equation}
H^0(X, \mathcal O_X(D))
\twoheadrightarrow 
H^0(X_0, \mathcal O_{X_0} (D))
\twoheadrightarrow 
H^0(X', \mathcal O_{X'}(D)) \twoheadrightarrow 
\mathcal O_{X'} (D)\otimes \mathbb C(P) 
\end{equation} 
by \eqref{eq59}, \eqref{eq512}, and \eqref{eq519}. 
This is the desired surjection. 
\end{case}
\end{step}
Finally, we further assume that 
$P$ is a singular point of $X$. 
\begin{step} Note that 
$(X, \Delta)$ is klt in a neighborhood of $P$ by assumption. 
We will reduce the problem to the situation 
as in Step \ref{step4}. 
Let $\pi:Y\to X$ be the minimal resolution of $P$. 
We put $K_Y+\Delta_Y=\pi^*(K_X+\Delta)$. 
Since $\Bs|\pi^*D| =\pi^{-1}\Bs|D|$, 
it is sufficient to prove that 
$Q\not\in \Bs|\pi^*D|$ for some 
$Q\in \pi^{-1}(P)$. 
Since $\pi:Y\to X$ is the minimal resolution of $P$, 
$f:(Z, \Delta_Z)\to [X, K_X+\Delta]$ factors 
through 
$[Y, K_Y+\Delta_Y]$ and 
$(Z, \Delta_Z)\to [Y, K_Y+\Delta_Y]$ 
induces a natural quasi-log structure compatible with 
the original semi-log canonical structure 
of $(Y, \Delta_Y)$ (see Step \ref{step1} and \cite[Theorem 1.2]{fujino-slc}). 
We put $Y_0 =\pi^{-1}(X_0)$ where $P\in X_0$ as in Step \ref{step4}. 
We can take an effective $\mathbb R$-Cartier divisor 
$B'$ on $Y_0$ such that 
$B'\sim _{\mathbb R} (\pi|_{Y_0})^*A_0$, 
$\mult _Q B'>2$ for some $Q\in \pi^{-1}(P)$, and 
$B'=(\pi|_{Y_0})^*B$ for some effective $\mathbb R$-Cartier divisor $B$ on $X_0$. 
We put 
$U'=Y_0 \setminus \Sing Y_0$. 
We set  
\begin{equation}
c=\underset{t\geq 0}{\sup}\left\{t \left|
\begin{array}{l}  {\text{$(U', (\Delta_Y)|_{U'}+tB'|_{U'})$ is log canonical}}\\
{\text{at any point of $\pi^{-1}(P)$. }} 
\end{array}\right. \right\}. 
\end{equation} 
Then we have $0<c<1$. 
By adjunction (see Theorem \ref{f-thm4.4}) and 
Lemma \ref{f-lem4.6}, 
we can consider a quasi-log structure of $[Y_0, (K_Y+\Delta_Y)|_{Y_0}+cB']$. 
If there is a one-dimensional qlc center $C$ of 
$[Y_0, (K_Y+\Delta_Y)|_{Y_0}+cB']$ 
such that 
\begin{equation}
(\pi^*D-((K_Y+\Delta_Y)|_{Y_0}+cB'))\cdot C=(1-c)(\pi|_{Y_0})^*A_0\cdot C=0. 
\end{equation} 
Then we obtain that $C\subset \pi^{-1}(P)$. 
This means that $P$ is a qlc center 
of $[X_0, (K_X+\Delta)|_{X_0}+cB]$. 
In this case, we obtain surjections 
\begin{equation}
H^0(X, \mathcal O_X(D))\twoheadrightarrow
H^0(X_0, \mathcal O_{X_0}(D))\twoheadrightarrow 
\mathcal O_{X_0}(D)\otimes \mathbb C (P)
\end{equation} 
as in Case \ref{case1} in Step \ref{step4} (see \eqref{eq59} and \eqref{eq513}). 
Therefore, we may assume that 
\begin{equation}
(\pi^*D-((K_Y+\Delta_Y)|_{Y_0}+cB'))\cdot C>0
\end{equation}  
for every one-dimensional qlc center $C$ of $[Y_0, (K_Y+\Delta_Y)|_{Y_0}+cB']$. 
Note that 
\begin{equation}
(\pi^*D-(K_Y+\Delta_Y))\cdot C =(D-(K_X+\Delta))\cdot 
\pi_* C=A\cdot \pi_* C\geq 2 
\end{equation} 
when $\pi_*C\ne 0$, equivalently, $C$ is not a component of $\pi^{-1}(P)$. 
Then we can apply 
the arguments in Step \ref{step4} to 
$[Y_0, (K_Y+\Delta_Y)|_{Y_0}+cB']$ and $\pi^*D$. 
Thus, we obtain that $Q\not\in \Bs|\pi^*D|$ for some $Q\in \pi^{-1}(P)$. 
This means that $P\not\in \Bs|D|$.  
\end{step} 
Anyway, we obtain that $P\not\in \Bs|D|$. 
\end{proof}

By Theorem \ref{f-thm5.1}, 
we can quickly prove Corollary \ref{f-cor1.4} as follows. 

\begin{proof}[Proof of Corollary \ref{f-cor1.4}] 
We put $D=mI(K_X+\Delta)$ and $A=D-(K_X+\Delta)
= (m-1/I)I(K_X+\Delta)$. 
Then we obtain that $A\cdot C\geq m-1/I$ for 
every curve $C$ on $X$ and that 
$(A^2\cdot X_i)\geq (m-1/I)^2$ for every 
irreducible component $X_i$ of $X$. 
By Theorem \ref{f-thm5.1}, we obtain the desired freeness of 
$|mI(K_X+\Delta)|$. The very ampleness part follows from Lemma \ref{f-lem7.1} below. 
\end{proof}

\begin{rem}\label{f-rem5.3}
In Corollary \ref{f-cor1.4}, $\Delta$ is not necessarily reduced. 
If $\Delta$ is reduced, then Corollary \ref{f-cor1.4} 
is a special case of \cite[Theorem 24]{lr}. 
We note that $\Delta$ is always assumed to be reduced in \cite{lr}. 
\end{rem}

As a special case of Corollary \ref{f-cor1.4}, we can recover Kodaira's 
celebrated result (see \cite{kodaira}). 
We state it explicitly for the reader's convenience. 

\begin{cor}[Kodaira]\label{f-cor5.4}
Let $X$ be a smooth projective surface such that 
$K_X$ is nef and big. 
Then $|mK_X|$ is basepoint-free for every $m\geq 4$. 
\end{cor}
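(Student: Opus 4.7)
The plan is to reduce to the canonical model and then apply Corollary \ref{f-cor1.4}. Note that a direct application of Theorem \ref{f-thm5.1} on $X$ itself is blocked whenever $X$ contains a $(-2)$-curve $C$, since then $A\cdot C = (m-1)K_X\cdot C = 0$ for $D=mK_X$, violating the hypothesis $A\cdot C\geq 2$. Contracting such curves removes this obstruction.

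First I would invoke the classical theory of minimal surfaces of general type: since $K_X$ is nef and big on a smooth projective surface, the canonical morphism $\pi\colon X\to X_{\mathrm{can}}=\mathrm{Proj}\,\bigoplus_{m\geq 0}H^0(X,\mathcal O_X(mK_X))$ exists, contracts exactly the $(-2)$-curves, and $X_{\mathrm{can}}$ has at worst Du Val singularities. In particular $X_{\mathrm{can}}$ is normal with canonical (hence log canonical, hence semi-log canonical) singularities, $K_{X_{\mathrm{can}}}$ is a Cartier divisor (Du Val points are Gorenstein), $K_X=\pi^*K_{X_{\mathrm{can}}}$, and $K_{X_{\mathrm{can}}}$ is ample.

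Next I would observe that $(X_{\mathrm{can}},0)$ is therefore a stable surface in the sense of the paper, with Cartier index $I=1$. Applying Corollary \ref{f-cor1.4} with $\Delta=0$ and $I=1$ yields that $|mK_{X_{\mathrm{can}}}|$ is basepoint-free for every $m\geq 4$.

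Finally I would pull back: since $\pi_*\mathcal O_X=\mathcal O_{X_{\mathrm{can}}}$ and $mK_X=\pi^*(mK_{X_{\mathrm{can}}})$, the projection formula gives $\pi_*\mathcal O_X(mK_X)=\mathcal O_{X_{\mathrm{can}}}(mK_{X_{\mathrm{can}}})$, so the natural map $H^0(X_{\mathrm{can}},\mathcal O_{X_{\mathrm{can}}}(mK_{X_{\mathrm{can}}}))\to H^0(X,\mathcal O_X(mK_X))$ is an isomorphism and $|mK_X|=\pi^*|mK_{X_{\mathrm{can}}}|$. Basepoint-freeness downstairs pulls back to basepoint-freeness upstairs. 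There is no real obstacle here; the only nontrivial ingredient beyond Corollary \ref{f-cor1.4} is the existence and Gorensteinness of the canonical model, both of which are standard for smooth surfaces of general type.
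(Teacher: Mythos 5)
Your proposal is correct and is exactly the paper's argument: the paper's proof is the one-line ``apply Corollary \ref{f-cor1.4} to the canonical model of $X$,'' and you have simply supplied the standard details (Du Val singularities, Gorensteinness so $I=1$, and pulling back sections along the crepant contraction). Nothing to add.
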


\begin{proof}[Proof of Corollary \ref{f-cor5.4}]
Apply Corollary \ref{f-cor1.4} to the canonical 
model of $X$. Then we obtain the desired freeness. 
\end{proof}

We close this section with the proof of Corollary \ref{f-cor1.5}. 

\begin{proof}[Proof of Corollary \ref{f-cor1.5}]
We put $D=-mI(K_X+\Delta)$ and $A=D-(K_X+\Delta)=-(m+1/I)I(K_X+\Delta)$. 
Then we obtain that $A\cdot C\geq m+1/I$ for every curve $C$ on $X$ and 
that $(A^2\cdot X_i)\geq (m+1/I)^2$ for 
every irreducible component $X_i$ of $X$. 
By Theorem \ref{f-thm5.1}, we obtain the desired freeness of 
$|-mI(K_X+\Delta)|$. 
The very ampleness part follows from Lemma \ref{f-lem7.1} below.
\end{proof}

\section{Log surfaces}\label{f-sec6} 
In this section, we prove Theorem \ref{f-thm1.6}. 
\begin{proof}[Proof of Theorem \ref{f-thm1.6}]
The proof is essentially the same as that of Theorem \ref{f-thm5.1}. 
However, there are some technical differences. 
We will have to use Theorem \ref{f-thm4.5} instead of 
Theorem \ref{f-thm4.4} (ii).  
So, we describe it for the reader's convenience. 
\setcounter{step}{0}
\begin{step}\label{step-s1} 
We take a resolution of singularities $f:Z\to X$ such that $\Supp f^{-1}_*\Delta
\cup \Exc(f)$ is a simple normal crossing divisor on $Z$, where 
$\Exc(f)$ is the exceptional locus of $f$. 
We put $K_Z+\Delta_Z=f^*(K_X+\Delta)$. 
Then, $(Z, \Delta_Z)$ gives a natural quasi-log structure on $[X, K_X+\Delta]$. 
\end{step}
\begin{step}\label{step-s2}
Assume that $(X, \Delta)$ is not log canonical at $x$. 
We put 
\begin{equation}
X'=\Nlc (X, \Delta)\cup \bigcup W, 
\end{equation} 
where $W$ runs over the one-dimensional 
log canonical centers of $(X, \Delta)$ such that 
$A\cdot W=0$. 
Then, by Theorem \ref{f-thm4.5}, 
we obtain 
\begin{equation}
H^i(X, \mathcal I_{X'}\otimes \mathcal O_X(D))=0
\end{equation} 
for every $i>0$, 
where $\mathcal I_{X'}$ is the defining ideal sheaf of $X'$. 
Note that $x$ is isolated in $\Supp \mathcal O_X/\mathcal I_{X'}$. 
Therefore, the restriction map 
\begin{equation}
H^0(X, \mathcal O_X(D))\to \mathcal O_X(D)\otimes \mathbb C(x)
\end{equation} 
is surjective. 
Thus, we obtain $x\not \in \Bs|D|$. 
\end{step}
From now on, we may assume that $(X, \Delta)$ is log canonical 
at $x$. 
\begin{step}
Assume that $x$ is a zero-dimensional 
log canonical center of $(X, \Delta)$. 
We put 
\begin{equation}
X'=\Nlc (X, \Delta)\cup \bigcup W\cup \{x\}, 
\end{equation} 
where $W$ runs over the one-dimensional 
log canonical centers of $(X, \Delta)$ such that 
$A\cdot W=0$. 
Then, by Theorem \ref{f-thm4.5}, 
we obtain 
\begin{equation}
H^i(X, \mathcal I_{X'}\otimes \mathcal O_X(D))=0
\end{equation} 
for every $i>0$. 
Note that $x$ is isolated in $\Supp \mathcal O_X/\mathcal I_{X'}$. 
Therefore, we obtain $x\not \in \Bs|D|$ as in 
Step \ref{step-s2}. 
\end{step}
From now on, we may assume that $(X, \Delta)$ is plt at $x$. 
\begin{step}\label{step-s4}
Assume that $(X, \Delta)$ is plt but is not klt at $x$. 
Let $L$ be the unique one-dimensional 
log canonical center of $(X, \Delta)$ passing through $x$. 
We put 
\begin{equation}
X' =\Nlc (X, \Delta) \cup \bigcup W \cup L
\end{equation} 
where $W$ runs over the one-dimensional log canonical 
centers of $(X, \Delta)$ such that $A\cdot W=0$. 
By Theorem \ref{f-thm4.5}, 
we obtain that  
\begin{equation}
H^i(X, \mathcal I_{X'}\otimes \mathcal O_X(D))=0 
\end{equation} 
for every $i>0$, as usual. 
Therefore, the restriction map 
\begin{equation}\label{eq68}
H^0(X, \mathcal O_X(D))\to H^0(X', \mathcal O_{X'}(D))
\end{equation} 
is surjective. 
By adjunction (see Theorem \ref{f-thm4.4}), 
$[X', (K_X+\Delta)|_{X'}]$ has a quasi-log structure 
induced by the quasi-log 
structure $f:(Z, \Delta_Z)\to [X, K_X+\Delta]$ constructed in Step \ref{step-s1}. 
Let $g:(Z', \Delta_{Z'})\to [X', (K_X+\Delta)|_{X'}]$ be the induced quasi-log resolution. 
We put 
\begin{equation}
c=\underset{t\geq 0}{\sup}\left\{t \left|
\begin{array}{l}  {\text{the normalization of 
$(Z', \Delta_{Z'}+tg^*x)$ is sub}}\\
{\text{log canonical over $X'\setminus \Nqlc((K_X+\Delta)|_{X'})$.}} 
\end{array}\right. \right\}. 
\end{equation} 
Then, by \cite[Lemma 3.16]{fujino-reid-fukuda}, we obtain that $0<c<2$. 
Note that $x$ is a Cartier divisor on $X'$. 
Let us consider $g:(Z', \Delta_{Z'}+cg^*x)\to [X', (K_X+\Delta)|_{X'}+cx]$, 
which defines a quasi-log structure. 
Then, by construction, $x$ is a qlc center of $[X', (K_X+\Delta)|_{X'}+cx]$. 
Moreover, 
we see that 
\begin{equation}
\deg(D|_L-(K_X+\Delta)|_L-cx)=(A\cdot L)-c>0
\end{equation} by assumption. 
We put 
\begin{equation}
X''=\Nqlc(X', (K_X+\Delta)|_{X'}+cx) \cup \bigcup W \cup \{x\}, 
\end{equation}  
where $W$ runs over the one-dimensional 
qlc centers of $[X', (K_X+\Delta)|_{X'}+cx]$ such that 
$W\ne L$. 
Then, by Theorem \ref{f-thm4.5}, we obtain 
\begin{equation}
H^i(X', \mathcal I_{X''}\otimes \mathcal O_{X'}(D))=0
\end{equation} 
for every $i>0$. 
Note that $x$ is isolated in $\Supp \mathcal O_{X'}/\mathcal I_{X''}$. 
Therefore, the restriction map 
\begin{equation}\label{eq613}
H^0(X', \mathcal O_{X'}(D))\to \mathcal O_{X'}(D)\otimes \mathbb C(x)
\end{equation} 
is surjective. 
By combining \eqref{eq68} with \eqref{eq613}, 
the desired restriction map 
\begin{equation}
H^0(X, \mathcal O_X(D))\to \mathcal O_X(D)\otimes \mathbb C(x)
\end{equation} 
is surjective. This means that 
$x\not\in \Bs|D|$. 
\end{step}
Thus, from now on, we may assume that $(X, \Delta)$ is klt at $x$. 
\begin{step}\label{step-s5}
In this step, we assume that $x$ is a smooth point of $X$. 
Since $A^2>4$, we can find an effective $\mathbb R$-Cartier divisor 
$B$ on $X$ such that 
$\mult_x B>2$ and that 
$B\sim _{\mathbb R} A$. 
We put 
\begin{equation}
c=\max\{ t\geq 0\, |\, (X, \Delta+tB) \ 
\text{is log canonical at $x$.}\}. 
\end{equation} 
Then we obtain that $0<c<1$ since $\mult _x B>2$. 
We have a natural quasi-log structure on $[X, K_X+\Delta+cB]$ as in 
Step \ref{step-s1}. 
By construction, 
there is a log canonical center of $[X, K_X+\Delta+cB]$ passing 
through $x$. 
We put 
\begin{equation}
X' =\Nlc (X, \Delta+cB)\cup \bigcup W \cup W_0, 
\end{equation}  
where $W_0$ is the minimal log canonical 
center of $(X, \Delta+cB)$ passing through $x$ 
and $W$ runs over the one-dimensional log 
canonical centers of $(X, \Delta+cB)$ such that 
$A\cdot W=0$. 
We note that $D-(K_X+\Delta+cB)\sim _{\mathbb R} (1-c)A$. 
Then, by Theorem \ref{f-thm4.5}, 
\begin{equation}\label{eq617}
H^i(X, \mathcal I_{X'}\otimes \mathcal O_X(D))=0
\end{equation}
for every $i>0$, where $\mathcal I_{X'}$ is the defining ideal sheaf of $X'$ on $X$. 
\setcounter{case}{0}
\begin{case}\label{case-s1}
If $\dim_x X'=0$, 
then $x$ is isolated in $\Supp \mathcal O_X /\mathcal I_{X'}$. 
Therefore, the restriction map 
\begin{equation}
H^0(X, \mathcal O_X(D))\to \mathcal O_X(D)\otimes \mathbb C(x)
\end{equation} 
is surjective. 
Thus, we obtain that $x\not\in \Bs|D|$. 
\end{case}
\begin{case}\label{case-s2}
If $\dim_x X'=1$, then $(X, \Delta+cB)$ is plt at $x$. 
We write $\Delta+cB=L+\Delta'$, where 
$L=W_0$ is the unique one-dimensional 
log canonical center of $(X, \Delta)$ passing through $x$ and $\Delta'=\Delta+cB-L$. 
We put 
\begin{equation}
c'=1-\mult _x (\Delta'|_L). 
\end{equation} 
Then $[X', (K_X+\Delta+cB)|_{X'}+c'x]$ has a quasi-log structure such that 
$x$ is a qlc center of 
this quasi-log structure as in Case \ref{case2} in Step \ref{step4} in the proof 
of Theorem \ref{f-thm5.1}. 
We put 
\begin{equation}
X''= \Nqlc (X', (K_X+\Delta+cB)|_{X'}+c'x) \cup \bigcup W \cup \{x\}, 
\end{equation} 
where $W$ runs over the one-dimensional qlc centers of 
$[X', (K_X+\Delta+cB)|_{X'}+c'x]$ such that $W\ne L$. 
By \eqref{eq517} in the proof of 
Theorem \ref{f-thm5.1}, we obtain that 
\begin{equation}\label{eq621}
\deg (D|_L-(K_X+\Delta+cB)|_L-c'x)>0. 
\end{equation} 
Then, by \eqref{eq621} and Theorem \ref{f-thm4.5}, 
\begin{equation}
H^i(X', \mathcal I_{X''}\otimes \mathcal O_{X'}(D))=0
\end{equation}  
for every $i>0$, where $\mathcal I_{X''}$ is 
the defining ideal sheaf of $X''$ on $X'$. 
Thus, we have that 
\begin{equation}\label{eq623}
H^0(X', \mathcal O_{X'} (D))\to \mathcal O_{X'}(D)\otimes 
\mathcal O_{X'} / \mathcal I_{X''}
\end{equation} 
is surjective. 
Note that $x$ is isolated in $\Supp \mathcal O_{X'}/ \mathcal I_{X''}$. 
Therefore, we obtain surjections  
\begin{equation}
\begin{split}
H^0(X, \mathcal O_X(D))\twoheadrightarrow 
H^0(X', \mathcal O_{X'}(D)) \twoheadrightarrow 
\mathcal O_{X'} (D)\otimes \mathbb C(x) 
\end{split}
\end{equation} 
by \eqref{eq617} and \eqref{eq623}. 
This is the desired surjection. 
\end{case}
\end{step}
Finally, we further assume that $x$ is a singular point of $X$. 
\begin{step}\label{step-s6}
Let $\pi:Y\to X$ be the minimal resolution of $x$. 
We put $K_Y+\Delta_Y=\pi^*(K_X+\Delta)$. 
Since $\Bs|\pi^*D| =\pi^{-1}\Bs|D|$, 
it is sufficient to prove that 
$y\not\in \Bs|\pi^*D|$ for some 
$y\in \pi^{-1}(x)$. 
Since $\pi:Y\to X$ is the minimal resolution of $x$,  
$f:(Z, \Delta_Z)\to [X, K_X+\Delta]$ factors 
through 
$[Y, K_Y+\Delta_Y]$ and 
$(Z, \Delta_Z)\to [Y, K_Y+\Delta_Y]$ induces a natural quasi-log structure on 
$[Y, K_Y+\Delta_Y]$. 
We can take an effective $\mathbb R$-Cartier divisor 
$B'$ on $Y$ such that 
$B'\sim _{\mathbb R} \pi^*A$, 
$\mult _y B'>2$ for some $y\in \pi^{-1}(x)$, and 
$B'=\pi^*B$ for some effective $\mathbb R$-Cartier divisor $B$ on $X$. 
We set  
\begin{equation}
c=\underset{t\geq 0}{\sup}\left\{t \left|
\begin{array}{l}  {\text{$(Y, \Delta_Y+tB')$ is log canonical}}\\
{\text{at any point of $\pi^{-1}(x)$.}} 
\end{array}\right. \right\}. 
\end{equation} 
Then we have $0<c<1$. 
As in Step \ref{step-s1}, 
we can consider a natural quasi-log structure of 
$[Y, K_Y+\Delta_Y+cB']$. 
If there is a one-dimensional qlc center $C$ of 
$[Y, K_Y+\Delta_Y+cB']$ 
such that $C\cap \pi^{-1}(x)\ne \emptyset$ and that 
\begin{equation}
(\pi^*D-(K_Y+\Delta_Y+cB'))\cdot C=(1-c)\pi^*A\cdot C=0. 
\end{equation} 
Then we obtain that $C\subset \pi^{-1}(x)$. 
This means that $x$ is a qlc center of $[X, K_X+\Delta+cB]$. 
In this case, we have that  
\begin{equation}
H^0(X, \mathcal O_X(D))\to \mathcal O_X(D)\otimes \mathbb C (x)
\end{equation} 
is surjective as in Case \ref{case-s1} in Step \ref{step-s5}. 
Therefore, we may assume that 
\begin{equation}
(\pi^*D-(K_Y+\Delta_Y+cB'))\cdot C>0
\end{equation}  
for every one-dimensional qlc center $C$ of $[Y, K_Y+\Delta_Y+cB']$ 
with $C\cap \pi^{-1}(x)\ne \emptyset$. 
We note that 
\begin{equation}
(\pi^*D-(K_Y+\Delta_Y))\cdot C =(D-(K_X+\Delta))\cdot \pi_* C 
= A\cdot \pi_* C\geq 2. 
\end{equation} 
Then we can apply 
the arguments in Step \ref{step-s5} to $[Y, K_Y+\Delta_Y+cB']$ and $\pi^*D$. 
Thus, we obtain that $y\not\in \Bs|\pi^*D|$ for some $y\in \pi^{-1}(x)$. 
This means that $x\not\in \Bs|D|$.  
\end{step}
Anyway, we obtain that $x\not\in \Bs|D|$. 
\end{proof}

\section{Effective very ampleness lemma}\label{f-sec7} 
In this section, we prove an effective very ampleness lemma. 
This section is independent of the other sections. 

The statement and the proof of 
\cite[1.2 Lemma]{kollar} do not seem to be true as stated. 
J\'anos Koll\'ar and the author think that 
we need some modifications. So, we 
prove the following lemma. 

\begin{lem}\label{f-lem7.1}
Let $(X, \Delta)$ be a projective semi-log canonical 
pair with $\dim X=n$. 
Let $D$ be an ample Cartier divisor on $X$ such that 
$|D|$ is basepoint-free. 
Assume that $L=D-(K_X+\Delta)$ is nef and log big with 
respect to $(X, \Delta)$, 
that is, 
$L$ is nef and $L|_W$ is big for every slc stratum $W$ of $(X, \Delta)$. 
Then $(n+1)D$ is very ample. 
\end{lem}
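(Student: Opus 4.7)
The plan is to adapt Koll\'ar's cutting-by-hyperplanes strategy for very ampleness to the semi-log canonical setting, using the vanishing theorems for quasi-log schemes developed in Section \ref{f-sec4}.

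I would begin by placing a natural qlc structure on $[X, K_X+\Delta]$: since $(X, \Delta)$ is slc, the results of \cite{fujino-slc} give such a structure with $X_{-\infty} = \emptyset$ whose qlc strata are exactly the slc strata. For every integer $k \geq 1$, the divisor $kD - (K_X + \Delta) = L + (k-1)D$ is nef and log big with respect to $[X, K_X+\Delta]$ (the nef $L$ is log big by hypothesis, and adding the nef semi-ample $(k-1)D$ preserves both properties), so Theorem \ref{f-thm4.4}(ii) yields
\[
H^i(X, \mathcal{O}_X(kD)) = 0 \qquad (i>0,\ k\geq 1).
\]

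Next I would reduce very ampleness to separation. By a standard criterion it suffices to show, for every length-two closed subscheme $Z \subset X$, surjectivity of $H^0(X, \mathcal{O}_X((n+1)D)) \to H^0(Z, \mathcal{O}_Z((n+1)D))$. Because $|D|$ is basepoint-free and $D$ is ample, the morphism $\phi_{|D|}: X \to \mathbb{P}^N$ is finite; picking $n$ sufficiently general hyperplanes $H_1, \ldots, H_n \subset \mathbb{P}^N$ through $\phi_{|D|}(\Supp Z)$ (with one tangent to the prescribed direction when $Z$ is nonreduced) produces divisors $D_i := \phi_{|D|}^* H_i \in |D|$ whose scheme-theoretic intersection $W := D_1 \cap \cdots \cap D_n$ is zero-dimensional and contains $Z$.

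The surjectivity would then be obtained by iterated restriction. Breaking the Koszul complex attached to the sections of $\mathcal{O}_X(D)$ defining $D_1, \ldots, D_n$ into short exact sequences and twisting by $(n+1)D$, each lift $H^0(W_{k-1}, (n+1)D) \twoheadrightarrow H^0(W_k, (n+1)D)$ (with $W_0 = X$ and $W_k = D_1 \cap \cdots \cap D_k$) is controlled by a group of the form $H^1(X, \mathcal{O}_X(jD))$ for some $j \geq 1$, which vanishes by the first step. This gives $H^0(X, (n+1)D) \twoheadrightarrow H^0(W, (n+1)D)$, and because $W$ is zero-dimensional with $Z \subset W$, the further restriction to $Z$ is automatic.

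The main obstacle I expect is the Koszul exactness on the semi-log canonical variety $X$: since $X$ is only $S_2$ and can fail to be Cohen--Macaulay for $n \geq 3$, the sections defining $D_1, \ldots, D_n$ need not form a Koszul-regular sequence at the points of $W$, so the complex need not resolve $\mathcal{O}_W$ and the naive iteration can break down. To circumvent this, one should either choose the $H_i$ generic enough that $W$ behaves like a complete intersection at the points of $Z$, or restrict to the qlc strata one at a time by combining Theorem \ref{f-thm4.4}(i) with Lemma \ref{f-lem4.6} so that each intermediate $W_k$ carries a compatible quasi-log structure on which Theorem \ref{f-thm4.4}(ii) applies directly.
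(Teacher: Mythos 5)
Your opening step (the vanishing $H^i(X,\mathcal O_X(kD))=0$ for $i>0$ and $k\ge 1$, obtained from Theorem \ref{f-thm4.4} (ii) applied to $kD-(K_X+\Delta)=L+(k-1)D$) is exactly how the paper begins. The gap is in the second half. The cutting-by-hyperplanes/Koszul argument needs the $n$ sections cutting out $W=D_1\cap\cdots\cap D_n$ to form a regular sequence in $\mathcal O_{X,P}$ for each $P\in \Supp Z$; for a local ring of dimension $n$ a system of parameters is a regular sequence if and only if the ring is Cohen--Macaulay. A semi-log canonical variety is only assumed $S_2$ and can fail to be Cohen--Macaulay once $n\ge 3$, and no genericity of the hyperplanes $H_i$ can repair this, because Cohen--Macaulayness is an intrinsic property of $\mathcal O_{X,P}$ at the prescribed point $P$ (already $W_2=D_1\cap D_2$ may acquire embedded points at $P$ through which $D_3$ is forced to pass, killing left-exactness of your short exact sequences). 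Your second workaround is also not convincing: $(X,\Delta+D_1+\cdots+D_n)$ need not be semi-log canonical near $P$, the qlc centers produced by Lemma \ref{f-lem4.6} are reduced and so cannot contain a nonreduced length-two $Z$, and Lemma \ref{f-lem4.6} is stated only for irreducible $X$. This is precisely the trap the paper warns about when it says that the statement and proof of \cite[1.2 Lemma]{kollar} ``do not seem to be true as stated.'' (Your argument does go through when $X$ is Cohen--Macaulay, e.g.\ for $n=2$, which is the only case used in the applications, but the lemma is asserted for all $n$.)

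The paper's proof avoids all local algebra on $X$. From $H^i(X,\mathcal O_X((n+1-i)D))=0$ it invokes Castelnuovo--Mumford regularity in Kleiman's form to get surjectivity of $H^0(X,\mathcal O_X(D))\otimes H^0(X,\mathcal O_X(mD))\to H^0(X,\mathcal O_X((m+1)D))$ for $m\ge n+1$, hence surjectivity of $\mathrm{Sym}^kH^0(X,\mathcal O_X((n+1)D))\to H^0(X,\mathcal O_X(k(n+1)D))$ for all $k\ge 1$. Setting $A=(n+1)D$ and $f=\Phi_{|A|}:X\to Y$, this surjectivity together with $f_*\mathcal O_X\simeq \mathcal O_Y$ identifies $H^0(Y,\mathcal O_Y(kH))$ with $H^0(X,\mathcal O_X(kA))$ for all $k$, and comparing $f$ with the closed embedding $\Phi_{|kA|}$ for $k\gg 0$ shows $f$ is an isomorphism, i.e.\ $(n+1)D$ is very ample. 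To fix your proposal you would either have to establish the Koszul exactness in this generality (which fails) or switch to this regularity argument.
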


We give a detailed proof of Lemma \ref{f-lem7.1} for the reader's convenience. 

\begin{proof}
By the vanishing theorem (see \cite[Theorem 1.10]{fujino-slc}), 
we obtain that $H^i(X, \mathcal O_X((n+1-i)D))=0$ for every $i>0$. 
Then, by the Castelnuovo--Mumford regularity, we see that 
\begin{equation}
H^0(X, \mathcal O_X(D))\otimes H^0(X, \mathcal O_X(mD))
\to H^0(X, \mathcal O_X((m+1)D))
\end{equation} 
is surjective for every $m\geq n+1$ (see, 
for example, \cite[Chapter II.~Proposition 1]{kleiman}). 
Therefore, we obtain that 
\begin{equation}\label{eq7.2}
\mathrm{Sym}^kH^0(X, \mathcal O_X((n+1)D))\to H^0(X, \mathcal O_X(k(n+1)D))
\end{equation} 
is surjective for every $k\geq 1$. 
We put $A=(n+1)D$ and consider $f=\Phi_{|A|}: X\to Y$. Then 
there is a very ample Cartier divisor $H$ on $Y$ such that 
$A\sim f^*H$. 
By construction and the surjection 
\eqref{eq7.2}, we have the following commutative diagram 
\begin{equation}
\xymatrix{
\mathrm{Sym}^kH^0(Y, \mathcal O_Y(H)) \ar@{->>}[r]\ar[d]& 
\mathrm{Sym}^kH^0(X, \mathcal O_X(A))\ar@{->>}[d]\\
H^0(Y, \mathcal O_Y(kH))\ar@{^{(}->}[r]&H^0(X, \mathcal O_X(kA))
}
\end{equation}
for every $k\geq 1$. 
This implies that $H^0(Y, \mathcal O_Y(kH))\simeq H^0(X, \mathcal O_X(kA))$ for 
every $k\geq 1$. Note that $\mathcal O_Y\simeq f_*\mathcal O_X$ by 
\begin{equation}
0\to \mathcal O_Y \to f_*\mathcal O_X\to \delta\to 0 
\end{equation} 
and 
\begin{equation}
\begin{split}
0&
\to H^0(Y, \mathcal O_Y(kH))\to H^0(X, \mathcal O_X(kA))
\\ &
\to H^0(Y, 
\delta\otimes \mathcal O_Y(kH)) \to 
H^1(Y, \mathcal O_Y(kH))\to \cdots
\end{split} 
\end{equation} 
for $k\gg 0$. 
By the following commutative diagram: 
\begin{equation}
\xymatrix{
X\ar[r]^{f}\ar@{^{(}->}[dr]_{\Phi_{|kA|}}& Y\ar@{^{(}->}[d]^{\Phi_{|kH|}} \\
&\mathbb P^N, 
}
\end{equation}
where $k$ is a sufficiently large positive integer such that 
$kA$ and $kH$ are very ample, we obtain that 
$f$ is an isomorphism. 
This means that $A=(n+1)D$ is very ample. 
\end{proof}

We close this section with a remark on 
the very ampleness for 
$n$-dimensional stable pairs and 
semi-log canonical Fano varieties (see \cite{fujino-kollar-type}). 

\begin{rem}\label{f-rem7.2}
Let $(X, \Delta)$ be a projective semi-log canonical pair 
with $\dim X=n$. 

Assume that $I(K_X+\Delta)$ is an ample Cartier divisor for some 
positive integer $I$. 
Then we put $D=I(K_X+\Delta)$, $a=2$, and 
apply \cite[Remark 1.3 and Corollary 1.4]{fujino-kollar-type}. 
We obtain that $NI(K_X+\Delta)$ is very ample, where 
$N=(n+1)2^{n+1}(n+1)!(2+n)=2^{n+1}(n+2)!(n+1)$. 

Assume that $-I(K_X+\Delta)$ is an ample Cartier divisor for some 
positive integer $I$. 
Then we put $D=-I(K_X+\Delta)$, $a=1$, and 
apply \cite[Remark 1.3 and Corollary 1.4]{fujino-kollar-type}. 
We obtain that $-NI(K_X+\Delta)$ is very ample, where 
$N=(n+1)2^{n+1}(n+1)!(1+n)=2^{n+1}(n+1)^3n!$. 

Our results for surfaces in this paper are much sharper than the above 
estimates for $n=2$. 
\end{rem}

%%%%%%%%%%%%%%%%%%%%%%%%%%%

\end{document}